\documentclass[11pt]{article}

\usepackage[margin=1in]{geometry}
\usepackage{amsmath,amssymb,amsthm}
\RequirePackage[colorlinks,citecolor=blue,urlcolor=blue]{hyperref}
\usepackage{booktabs}
\usepackage{array}
\usepackage{microtype}
\usepackage{pgfplots}
\pgfplotsset{compat=1.18}

\title{Dunnett’s and Tukey’s procedures under mixed model covariance structures: dimensional reduction of multivariate‑t integrals}
\author{Dmytro Ivanchenko, Rostyslav Yamnenko \\ \small Taras Shevchenko National University of Kyiv, Ukraine}
\date{19.10.2026}

\newtheorem{proposition}{Proposition}

\newcommand{\E}{\operatorname{E}}
\newcommand{\Var}{\operatorname{Var}}
\newcommand{\Cov}{\operatorname{Cov}}
\newcommand{\Corr}{\operatorname{Corr}}
\newcommand{\Prob}{\operatorname{P}}
\renewcommand{\d}{\textrm{d}}

\begin{document}
\maketitle

\begin{abstract}
Dunnett's test compares several treatments with one control, and Tukey's test
compares every pair for independent samples with a common variance.
We investigate which designs and
covariances for a linear mixed model remain on the ordinary Dunnett and Tukey integrals, and in which
direction the constants move when the correlation changes.
It is shown that for Dunnett's procedure AR(1), the Toeplitz band, alternating ante-dependence, and the
random slope have roots below the ordinary analysis-of-variance value, while heterogeneous compound symmetry
and heteroscedastic groups have roots above that value.  For Tukey's procedure every computed
non-spherical covariance lowers the root.

The second goal is the dimensional reduction of these multivariate-\(t\)
integrals: a normal probability mixed over one chi-square scale.
Conditioning on the latent variables shared by the contrasts replaces that
probability by a product of univariate normals, and the integral has the
dimension of the conditioning vector plus one. Balanced compound symmetry
leaves a double integral. The studentized range reduces to the
Pearson--Hartley integral. The reduction stops for asymmetric overlap of
four or more treatments, for four or more autoregressive or unstructured
follow-up times, for pair-dependent variances or correlated disjoint pairs,
and for a separate variance estimate in each group.
\end{abstract}

\section{Introduction}
\label{sec:intro}

Dunnett's procedure~\cite{Dunnett1955} compares \(p\) treatments with one
control at a joint confidence level \(P\). Its critical values are the roots
of the one-sided integral~\eqref{eq:6} and the two-sided
integral~\eqref{eq:7}, derived below for independent, normally distributed samples with a common
variance. Equal sample sizes produce correlation \(1/2\).
Dunnett~\cite{Dunnett1964} gives the exact two-sided tables at that
correlation, and notes that when blocks are random the error variance is
the interaction mean square. In a balanced mixed model the subject effects
cancel in the comparisons with the control. For Dunnett's contrasts that
cancellation leaves the correlation equal to \(1/2\) for every intra-class
correlation. That identity is derived below. Hochberg and
Tamhane~\cite{HochbergTamhane1983} prove the Tukey counterpart in
Scheff{\'e}'s symmetric mixed model for a balanced two-way layout: both
intra-class correlations drop out of the pairwise comparisons among the
levels of the fixed factor, and the studentized range that uses the
interaction mean square is exact. 
The first goal of this paper is to determine which other designs and covariance structures remain on that same integral, and in which direction the constants in \cite{Dunnett1955, Dunnett1964}, or the root at correlation \(1/2\), deviate when the correlation structure
changes. That root is also the critical value of the ordinary analysis of
variance. Balanced compound symmetry leaves the correlation at \(1/2\), so
at a given \(\nu\) the critical constant is unchanged.

The same question applies to Tukey's honestly significant
difference~\cite{Tukey1953},
which compares every pair of treatments. Its critical value is a percentage point
of the studentized range, derived below. Any spherical covariance, balanced
compound symmetry included, leaves the pairwise correlation unchanged, so at
a given \(\nu\) the critical constant is the ordinary Tukey constant. A
covariance that moves those correlations changes the root, and a maximum
absolute pairwise statistic lying between the two roots is a family on which
the mixed-model procedure and the ordinary Tukey procedure disagree.

The second goal is to reduce multivariate $t$ integrals to
a form that can be computed effectively. Dunnett's integrals~\eqref{eq:6} and~\eqref{eq:7} are \(p\)-dimensional normal
probabilities mixed over an independent chi-square scale. Whenever the
contrasts share a small number of latent variables, conditioning on those
variables turns the \(p\)-dimensional probability into a product of
univariate normal probabilities, and the integral has the dimension of the
conditioning vector plus one. In the balanced compound-symmetry model that
dimension is two. The same reduction is carried through unbalanced and
incomplete layouts, \(N\)-way repeated measures on one or more factors, and the other
covariance structures of mixed-model practice, and the paper studies where
the reduction stops. The studentized-range probability is reduced below to
the Pearson-Hartley integral. Section~\ref{sec:oneway} shows that sphericity
leaves the corresponding critical constant equal to the ordinary Tukey
constant, and records the covariances that move it.
Section~\ref{sec:nway} writes both probabilities for an \(N\)-way layout
with \(M\) within-subjects factors.

\subsection{Dunnett's integrals}
\label{sec:integral}

Let treatment \(0\) be the control and let treatments \(1,\ldots,p\) be the
experimental treatments. Write \(\bar X_i\) for the mean of \(n_i\) independent
normal observations on treatment \(i\), each with variance \(\sigma^2\), and
let \(s^2\) be an estimator of \(\sigma^2\), independent of the means and based
on \(\nu\) degrees of freedom. Dunnett~\cite[Section~III]{Dunnett1955} forms
the studentized comparisons
\begin{equation}
\label{eq:z}
z_i
=
\frac{(\bar X_i-\bar X_0)-(\mu_i-\mu_0)}
{\sqrt{1/n_i+1/n_0}},
\qquad
t_i=\frac{z_i}{s},
\qquad i=1,\ldots,p.
\end{equation}
The denominator is the standard deviation of \(\bar X_i-\bar X_0\), divided
by \(\sigma\). It removes the sample sizes from the variance, so that each
\(z_i\) has variance \(\sigma^2\) rather than
\(\sigma^2(1/n_i+1/n_0)\). Dividing by \(s\) then produces a multivariate
analogue of Student's \(t\), in the sense of Dunnett and
Sobel~\cite{DunnettSobel1954} and Cornish~\cite{Cornish1954}: a normal vector
studentized by an independent root-mean-square estimator of its scale.

Because the treatment samples are independent, the only covariance among the
differences \(\bar X_i-\bar X_0\) comes from the shared control mean,
\(\Cov(\bar X_i-\bar X_0,\bar X_j-\bar X_0)=\sigma^2/n_0\). Dividing by the
two standard deviations gives Dunnett's correlation
\begin{equation}
\label{eq:rho-oneway}
\rho_{ij}
=
\frac{1/n_0}
{\sqrt{(1/n_i+1/n_0)(1/n_j+1/n_0)}}
=
\bigl[(n_0/n_i+1)(n_0/n_j+1)\bigr]^{-1/2}.
\end{equation}
The first expression is the definition of correlation; the second is the form
printed by Dunnett~\cite[p.~1103]{Dunnett1955}. Equal sample sizes make every
factor \(n_0/n_i+1\) equal to \(2\), so \(\rho_{ij}=1/2\).

One-sided limits with joint confidence coefficient \(P\) use constants
\(d_i'\) satisfying Dunnett~\cite[equation~(4)]{Dunnett1955},
\begin{equation}
\label{eq:4}
\Prob(t_1<d_1',\ldots,t_p<d_p')=P.
\end{equation}
A lower limit for \(\mu_i-\mu_0\) is
\(\bar X_i-\bar X_0-d_i' s\sqrt{1/n_i+1/n_0}\). The event that every such
limit lies below the corresponding parameter is exactly the event in
\eqref{eq:4}. An upper limit uses the same constant on the other side, by
symmetry of the centered normal vector. Conditioning on the value of \(s\)
turns~\eqref{eq:4} into
\begin{equation}
\label{eq:6}
P
=
\int_{-\infty}^{\infty}
F(d_1' s,\ldots,d_p' s)\,p(s)\,\d s.
\end{equation}
Here \(F(a_1,\ldots,a_p)=\Prob(z_1<a_1,\ldots,z_p<a_p)\) is the distribution
function of the normal vector \((z_1,\ldots,z_p)\), and \(p(s)\) is the density
of \(s\). The integral is the law of total probability: given \(s\), the event
\(\{t_i<d_i'\}\) is the event \(\{z_i<d_i' s\}\). Dunnett writes the range as
\((-\infty,\infty)\). Since \(s=\sigma\sqrt{\chi^2_\nu/\nu}\) is supported on
\((0,\infty)\), the integrand vanishes for \(s<0\).

For \(\rho_{ij}=1/2\) and equal coordinates, Dunnett evaluated \(F\) by
numerical integration to produce Tables~1a and~1b, which therefore solve~\eqref{eq:4}
and~\eqref{eq:6}. The infinite-degree row was taken from
Bechhofer~\cite{Bechhofer1954}. Intermediate degrees of freedom were
interpolated in \(1/\nu\).

Two-sided limits use constants \(d_i''\) satisfying
Dunnett~\cite[equation~(5)]{Dunnett1955},
\begin{equation}
\label{eq:5}
\Prob\bigl(|t_1|<d_1'',\ldots,|t_p|<d_p''\bigr)=P,
\end{equation}
and the interval is
\(\bar X_i-\bar X_0\pm d_i'' s\sqrt{1/n_i+1/n_0}\).
The same conditioning produces
\begin{equation}
\label{eq:7}
P
=
\int_{-\infty}^{\infty}
G(d_1'' s,\ldots,d_p'' s)\,p(s)\,\d s,
\end{equation}
where \(G(a_1,\ldots,a_p)=\Prob(|z_1|<a_1,\ldots,|z_p|<a_p)\) is the
distribution function of the absolute values \(|z_i|\). The function \(G\)
replaces \(F\) because both tails of each coordinate are constrained. No table
of \(G\) was available in 1955, so Tables~2a and~2b do not solve~\eqref{eq:7}.

Throughout this note the constants are taken equal,
\(d_1'=\cdots=d_p'=d'\) and \(d_1''=\cdots=d_p''=d''\). Dunnett adopts this
choice because it simplifies the tables and, when \(\rho_{ij}=\rho>0\),
minimizes \(\sum d_i'^2\) and \(\sum d_i''^2\) among solutions of~\eqref{eq:4}
and~\eqref{eq:5}~\cite[p.~1105]{Dunnett1955}. That sum-of-squares argument
uses one common correlation \(\rho>0\). Once the loadings differ, a single
constant is kept as a convention, so that each family has one critical value.

Until \(G\) could be computed, Dunnett and Sobel~\cite{DunnettSobel1955} gave
product bounds. The one-sided bound,
Dunnett~\cite[equation~(8)]{Dunnett1955}, replaces the joint
probability by the product of its marginal \(t\) probabilities. The sharper
one-sided bound, Dunnett~\cite[equation~(10)]{Dunnett1955}, uses the
bivariate probability:
\begin{equation}
\label{eq:10}
\Prob(t_i<d'\ \text{for all}\ i\mid\rho_{ij}=\rho)
\ge
\bigl[
\Prob(t_1<d',\,t_2<d'\mid\rho_{12}=\rho)
\bigr]^{p/2}.
\end{equation}
The two-sided companion is Dunnett~\cite[equation~(11)]{Dunnett1955},
\begin{equation}
\label{eq:11}
\Prob\bigl(|t_i|<d''\ \text{for all}\ i\bigm|\rho_{ij}=\rho\bigr)
\ge
\bigl[
\Prob\bigl(|t_1|<d'',\,|t_2|<d''\bigm|\rho_{12}=\rho\bigr)
\bigr]^{p/2}.
\end{equation}
Both inequalities become equalities at \(p=1\) and at \(p=2\), where the
joint event is already the bivariate event and the exponent is \(1\). For
\(p\ge 3\), a critical value that makes the right-hand side equal to \(P\)
produces a joint probability strictly larger than \(P\). The footnotes to
Tables~2a and~2b state that those tables solve the right-hand side
of~\eqref{eq:11}. There is no corresponding gap for Tables~1a and~1b, because
those tables were computed from~\eqref{eq:6}.

\subsection{Reduction of the integral}
\label{sec:reduction}

Let \(U,V_1,\ldots,V_p\) be independent standard normal variables. Dunnett's
Algorithm AS~251~\cite[equation~(1), p.~565]{Dunnett1989} writes the
correlation as a product,
\begin{equation}
\label{eq:product}
\rho_{ij}=b_i b_j,
\qquad i\neq j,\quad -1<b_i<1,
\end{equation}
and represents the standardized coordinates by
\begin{equation}
\label{eq:product-rep}
\frac{z_i}{\sigma}
=
b_i\,U+\sqrt{1-b_i^2}\,V_i.
\end{equation}
Independence of \(U\) and the \(V_i\) gives
\(\Cov(z_i/\sigma,z_k/\sigma)=b_i b_k\) for \(i\neq k\), and each coordinate
has variance \(b_i^2+(1-b_i^2)=1\).
When every correlation equals a common value \(\rho\in[0,1)\), the same page
sets each loading equal to \(\sqrt{\rho}\). Substitution
in~\eqref{eq:product-rep} produces the equicorrelated representation
\begin{equation}
\label{eq:factor}
\frac{z_i}{\sigma}
=
\sqrt{\rho}\,U+\sqrt{1-\rho}\,V_i.
\end{equation}
The coordinates are standard normal, and \(\Cov(z_i/\sigma,z_k/\sigma)=\rho\)
for \(i\neq k\).
For \(\rho=1/2\) the common loading is \(1/\sqrt{2}\), and the representation
is that of Gupta and Sobel~\cite{GuptaSobel1957}, with \(U\) the standardized
control mean.

Put \(w=s/\sigma\). Conditionally on \(U=u\) and on \(w\), independence of
the \(V_i\) factors the one-sided event in~\eqref{eq:6} as
\begin{equation}
\label{eq:cond-one}
\Prob(z_i<d's\text{ for all }i\mid U=u,\,w)
=
\left[
\Phi\!\left(\frac{d'w-\sqrt{\rho}\,u}{\sqrt{1-\rho}}\right)
\right]^p
\end{equation}
and the two-sided event in~\eqref{eq:7} as
\begin{equation}
\label{eq:cond-two}
\left[
\Phi\!\left(\frac{d''w+\sqrt{\rho}\,u}{\sqrt{1-\rho}}\right)
-
\Phi\!\left(\frac{-d''w+\sqrt{\rho}\,u}{\sqrt{1-\rho}}\right)
\right]^p.
\end{equation}
Here \(\Phi\) is the standard normal distribution function. The form
\eqref{eq:cond-two} replaces \(u\) by \(-u\), which leaves the integral
against the symmetric density of \(U\) unchanged. Removing the conditioning
produces the double integral
\begin{equation}
\label{eq:reduced}
\begin{aligned}
P(d'')
&=
\int_0^{\infty}\int_{-\infty}^{\infty}
\left[
\Phi\!\left(\frac{d''w+\sqrt{\rho}\,u}{\sqrt{1-\rho}}\right)
-
\Phi\!\left(\frac{-d''w+\sqrt{\rho}\,u}{\sqrt{1-\rho}}\right)
\right]^p
\phi(u)\,f_\nu(w)\,\d u\,\d w,
\end{aligned}
\end{equation}
and the one-sided integral
\begin{equation}
\label{eq:reduced-one}
P(d')
=
\int_0^{\infty}\int_{-\infty}^{\infty}
\left[
\Phi\!\left(\frac{d'w-\sqrt{\rho}\,u}{\sqrt{1-\rho}}\right)
\right]^p
\phi(u)\,f_\nu(w)\,\d u\,\d w.
\end{equation}
At \(\rho=1/2\) the two-sided integral is formula~(7.2) of Gupta and
Sobel~\cite[formula~(7.2)]{GuptaSobel1957}, and the one-sided integral is
their formula~(4.2)~\cite[formula~(4.2)]{GuptaSobel1957}.
Here \(\phi\) is the standard normal density and \(f_\nu\) is the density of
\(w=\sqrt{\chi^2_\nu/\nu}\),
\begin{equation}
\label{eq:wdensity}
f_\nu(w)
=
\frac{\nu^{\nu/2}}{2^{\nu/2-1}\,\Gamma(\nu/2)}
\,
w^{\nu-1}
\exp\!\bigl(-\nu w^2/2\bigr),
\qquad w>0.
\end{equation}
As \(\nu\to\infty\), \(w\) concentrates at \(1\) and each
double integral reduces to a single normal integral.

\subsection{Pairwise contrasts and the studentized range}
\label{sec:tukey-range}

Let \(\bar Y_1,\ldots,\bar Y_k\) be the means of \(k\) independent normal
samples of size \(n\), with common variance \(\sigma^2\), and let \(s^2\) be
an independent estimator of \(\sigma^2\) on \(\nu\) degrees of freedom. The
studentized range is
\begin{equation}
\label{eq:range}
Q
=
\frac{\max_i\bar Y_i-\min_i\bar Y_i}{s/\sqrt{n}}.
\end{equation}
The Tukey critical value \(q_{P,k,\nu}\) solves \(\Prob(Q\le q_{P,k,\nu})=P\).
An equivalent description uses the \(m=k(k-1)/2\) pairwise statistics
\begin{equation}
\label{eq:tij}
z_{ij}
=
\frac{\bar Y_i-\bar Y_j}{\sqrt{2/n}},
\qquad
t_{ij}=\frac{z_{ij}}{s},
\qquad
1\le i<j\le k.
\end{equation}
Dividing by \(\sqrt{2/n}\) removes the sample size, so each \(z_{ij}\) has
variance \(\sigma^2\). The largest absolute pairwise difference is the range,
so
\begin{equation}
\label{eq:d-from-q}
\max_{i<j}|t_{ij}|
=
\frac{Q}{\sqrt{2}}.
\end{equation}
The critical constant on the scale of Dunnett's \(d''\) is therefore
\(d_T=q_{P,k,\nu}/\sqrt{2}\). Throughout, \(P=0.95\), and \(d_T\) denotes
that root.

Because the samples are independent and of equal size, the correlation of
\((z_{ij})_{i<j}\) is
\begin{equation}
\label{eq:tukey-corr}
\Corr(\bar Y_i-\bar Y_j,\bar Y_a-\bar Y_b)
=
\begin{cases}
\pm 1/2, & \text{the pairs share a treatment},\\
0, & \text{the pairs are disjoint}.
\end{cases}
\end{equation}
The sign is fixed by the orientation \(\bar Y_i-\bar Y_j\): two contrasts that
both leave the shared treatment, or both enter it, have correlation \(+1/2\),
and two contrasts that pass through the shared treatment in opposite directions
have correlation \(-1/2\). For \(k=3\) every two pairs share a treatment, so
every absolute correlation is \(1/2\). For \(k\ge 4\) the disjoint pairs
contribute exact zeros. The \(m\) contrasts span only \(k-1\) dimensions, since
they sum to zero around every closed loop, and the correlation matrix is
singular for every \(k\ge 3\).

Conditioning on \(s\), as in~\eqref{eq:6} and~\eqref{eq:7}, gives the
coverage integrals. One-sided limits with a common constant \(d_T'\) satisfy
\begin{equation}
\label{eq:tukey-one}
P
=
\int_{-\infty}^{\infty}
F(d_T' s,\ldots,d_T' s)\,p(s)\,\d s,
\end{equation}
where \(F=\Prob(z_{ij}<a_{ij}\text{ for all }i<j)\) has one argument for each
pair. Two-sided limits with the constant \(d_T\) satisfy
\begin{equation}
\label{eq:tukey-two}
P
=
\int_{-\infty}^{\infty}
G(d_T s,\ldots,d_T s)\,p(s)\,\d s,
\end{equation}
where \(G=\Prob(|z_{ij}|<a_{ij}\text{ for all }i<j)\).
By~\eqref{eq:d-from-q}, the event in~\eqref{eq:tukey-two} is
\(\{Q\le q_{P,k,\nu}\}\).

\subsection{Reduction of the range probability}
\label{sec:tukey-reduce}

Equation~\eqref{eq:tukey-two} is \(\Prob(Q\le q)\) at \(q=d_T\sqrt{2}\).
Let \(Z_1,\ldots,Z_k\) be independent standard normals, \(W=\max_i Z_i-\min_i Z_i\). Integrating their joint density over
\(W\le q\) produces the probability integral tabulated by Pearson and
Hartley~\cite{PearsonHartley1943},
\begin{equation}
\label{eq:pearson-hartley}
\Prob(W\le q)
=
k\int_{-\infty}^{\infty}
\phi(x)\,
\bigl[\Phi(x+q)-\Phi(x)\bigr]^{k-1}
\,\d x.
\end{equation}
The factor \(k\) counts which sample mean falls at the lower end of the
interval of length \(q\). The integrand is then the probability that the other
\(k-1\) means fall inside that interval. Mixing over \(w=s/\sigma\), with
density~\eqref{eq:wdensity}, replaces \(q\) by \(qw\):
\begin{equation}
\label{eq:tukey-student}
\begin{aligned}
\Prob(Q\le q)
&=
\int_0^{\infty}
k\int_{-\infty}^{\infty}
\phi(x)\,
\bigl[\Phi(x+qw)-\Phi(x)\bigr]^{k-1}
\,\d x
\,
f_\nu(w)\,\d w.
\end{aligned}
\end{equation}
At \(q=d_T\sqrt{2}\) this is~\eqref{eq:tukey-two}. The inner integral is
one-dimensional, so the probability of the \(m\) pairs is a double integral
for every \(k\). As \(\nu\to\infty\), \(w\) concentrates at \(1\)
and~\eqref{eq:tukey-student} reduces to~\eqref{eq:pearson-hartley}.

\section{One-way linear mixed model}
\label{sec:oneway}

The one-way mixed model has a single fixed treatment factor crossed with a
random subject. Compound symmetry is the leading case: the contrast
correlation is \(1/2\), the interaction mean square studentizes the
comparisons, and integrals~\eqref{eq:6} and~\eqref{eq:7} reduce to double
integrals. The same skeleton covers unequal replication, incomplete subject
sets, and the other covariance structures of mixed-model practice.
Sphericity leaves the Tukey constant equal to the ordinary
studentized-range value.

\subsection{Mixed model with compound symmetry}
\label{sec:model}

Consider the balanced random-intercept model
\begin{equation}
\label{eq:lmm}
Y_{ij}=\mu_i+b_j+\varepsilon_{ij},
\qquad
i=0,\ldots,p,
\quad
j=1,\ldots,n,
\end{equation}
with \(b_j\sim N(0,\sigma_b^2)\), \(\varepsilon_{ij}\sim N(0,\sigma^2)\), and all
random terms independent. This is the balanced mixed model used when every
subject, or every block, is observed once under the control and under each
treatment~\cite[Section~2]{Dunnett1964}. Marginally, the \(p+1\) observations
on subject \(j\) have compound symmetry, because a shared \(b_j\) contributes
the same covariance to every pair and the residuals contribute variance only
on the diagonal~\cite{Wolfinger1996}:
\[
\Var(Y_{ij})=\sigma_b^2+\sigma^2,
\qquad
\Corr(Y_{ij},Y_{i'j})
=
\rho_Y
=
\frac{\sigma_b^2}{\sigma_b^2+\sigma^2}
\quad(i\neq i').
\]
The comparisons with the control are
\[
D_i=\bar Y_{i\cdot}-\bar Y_{0\cdot},
\qquad
\E[D_i]=\mu_i-\mu_0.
\]
The subject effects appear in both means and cancel, so
\begin{equation}
\label{eq:contrast-moments}
\Var(D_i)=\frac{2\sigma^2}{n},
\qquad
\Cov(D_i,D_k)=\frac{\sigma^2}{n},
\qquad
\Corr(D_i,D_k)=\frac12.
\end{equation}
The correlation in integrals~\eqref{eq:6} and~\eqref{eq:7} is \(1/2\) for
every intra-class correlation \(\rho_Y\in[0,1)\). The value is determined by
the second moments.

\begin{proposition}
\label{prop:cs}
Let \(p\ge 1\) and let \(Y_0,\ldots,Y_p\) have common variance \(\tau^2>0\) and
common pairwise covariance \(\rho_Y\tau^2\), where \(-1/p\le\rho_Y<1\). Then, for all \(i,j\in\{0,\ldots,p\}\) with
\(i\neq j\),
\[
\Var(Y_i-Y_j)=2\tau^2(1-\rho_Y)>0,
\]
and, for all \(1\le i\neq k\le p\),
\[
\Cov(Y_i-Y_0,Y_k-Y_0)=\tau^2(1-\rho_Y),
\qquad
\Corr(Y_i-Y_0,Y_k-Y_0)=\frac12.
\]
For \(p\ge 2\) the contrasts with the control are therefore equicorrelated,
with correlation \(1/2\) for every such \(\tau^2\) and \(\rho_Y\).
\end{proposition}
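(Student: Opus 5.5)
The plan is a direct second-moment computation using bilinearity of covariance; no distributional assumption is needed, which matches the remark before the proposition that the value is fixed by the second moments. Write \(c=\rho_Y\tau^2\) for the common covariance and expand each variance and covariance of a difference into the four covariances of the underlying coordinates.

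The first step is the variance of a difference. For \(i\neq j\),
\[
\Var(Y_i-Y_j)=\Var(Y_i)+\Var(Y_j)-2\Cov(Y_i,Y_j)=2\tau^2-2\rho_Y\tau^2=2\tau^2(1-\rho_Y).
\]
Positivity follows from \(\tau^2>0\) and \(\rho_Y<1\). I will also note that the lower bound \(\rho_Y\ge -1/p\) is exactly the condition that the \((p+1)\times(p+1)\) compound-symmetry matrix \(\tau^2[(1-\rho_Y)I+\rho_Y J]\) be positive semidefinite. Its eigenvalues are \(\tau^2(1-\rho_Y)\) and \(\tau^2(1+p\rho_Y)\). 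So the hypotheses describe a genuine covariance matrix, but they are not used further in the computation.

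The second step is the covariance of two comparisons with the control. For \(1\le i\neq k\le p\), all four indices \(i,k,0\) are involved, with \(i\neq k\), \(i\neq 0\) and \(k\neq 0\). Expanding gives
\[
\Cov(Y_i-Y_0,Y_k-Y_0)=\Cov(Y_i,Y_k)-\Cov(Y_i,Y_0)-\Cov(Y_0,Y_k)+\Var(Y_0)=c-c-c+\tau^2=\tau^2(1-\rho_Y).
\]
Dividing by \(\sqrt{\Var(Y_i-Y_0)\Var(Y_k-Y_0)}=2\tau^2(1-\rho_Y)\) gives the correlation \(1/2\). The denominator is nonzero by the first step. The factor \(1-\rho_Y\) cancels, which is why the correlation does not depend on \(\tau^2\) or \(\rho_Y\).

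The last sentence of the statement then follows immediately. For \(p\ge2\) there is at least one pair \(i\neq k\), and every such pair has the same correlation. The only point needing care is the bookkeeping of which pairs of indices are distinct in the four-term expansion, so that each cross covariance is correctly identified as \(c\) and the single diagonal term as \(\tau^2\). There is no real obstacle.
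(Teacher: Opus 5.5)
Your proposal is correct and follows the paper's proof essentially line for line: the same eigenvalue remark on \(\tau^2\bigl((1-\rho_Y)I_{p+1}+\rho_Y 11'\bigr)\) to explain the range of \(\rho_Y\), the same expansion of \(\Var(Y_i-Y_j)\), and the same four-term bilinear expansion of \(\Cov(Y_i-Y_0,Y_k-Y_0)\), divided by \(2\tau^2(1-\rho_Y)\). One wording slip: that expansion involves three distinct indices \(i,k,0\), not four.
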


\begin{proof}
Write \(\Sigma=\tau^2\bigl((1-\rho_Y)I_{p+1}+\rho_Y 11'\bigr)\) for the
covariance matrix of \((Y_0,\ldots,Y_p)'\). The eigenvalues of the matrix in
parentheses are \(1-\rho_Y\), of multiplicity \(p\), and \(1+p\rho_Y\), of
multiplicity one, so \(\Sigma\) is positive semidefinite if and only if
\(-1/p\le\rho_Y\le 1\). The restriction \(\rho_Y<1\) is the further condition
that a difference of two coordinates has positive variance. For \(i\neq j\),
\begin{align*}
\Var(Y_i-Y_j)
&=
\Var(Y_i)+\Var(Y_j)-2\Cov(Y_i,Y_j)
=
2\tau^2-2\rho_Y\tau^2
=
2\tau^2(1-\rho_Y).
\end{align*}
For \(1\le i\neq k\le p\), bilinearity gives
\begin{align*}
\Cov(Y_i-Y_0,Y_k-Y_0)
&=
\Cov(Y_i,Y_k)-\Cov(Y_i,Y_0)-\Cov(Y_k,Y_0)+\Var(Y_0)
\\
&=
\rho_Y\tau^2-\rho_Y\tau^2-\rho_Y\tau^2+\tau^2
=
\tau^2(1-\rho_Y).
\end{align*}
The two standard deviations equal \(\sqrt{2\tau^2(1-\rho_Y)}\), so
\[
\Corr(Y_i-Y_0,Y_k-Y_0)
=
\frac{\tau^2(1-\rho_Y)}{2\tau^2(1-\rho_Y)}
=
\frac12.
\]
\end{proof}

Sphericity is the second-moment condition that every pairwise difference has
the same positive variance. That condition forces the control contrasts to
have correlation \(1/2\).

\begin{proposition}
\label{prop:sphere}
Let \(p\ge 2\) and let \(Y_0,\ldots,Y_p\) have finite second moments. If
\(\Var(Y_i-Y_j)=v\) for some constant \(v>0\) and every \(0\le i<j\le p\),
then, for all \(1\le i\neq k\le p\),
\[
\Var(Y_i-Y_0)=v,
\qquad
\Cov(Y_i-Y_0,Y_k-Y_0)=\frac v2,
\qquad
\Corr(Y_i-Y_0,Y_k-Y_0)=\frac12.
\]
\end{proposition}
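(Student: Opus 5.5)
The proof will use only the polarization identity for covariances of differences. Put \(A=Y_i-Y_0\) and \(B=Y_k-Y_0\) for \(1\le i\neq k\le p\). The hypothesis with the pair \((0,i)\) gives \(\Var(A)=v\) at once, since \(\Var(Y_i-Y_0)=\Var(Y_0-Y_i)\). The key observation is that \(A-B=Y_i-Y_k\) is again a pairwise difference, with \(i\neq k\), so its variance is also \(v\) by hypothesis.

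Next I expand
\[
\Var(A-B)=\Var(A)+\Var(B)-2\Cov(A,B).
\]
Substituting \(v\) for all three variances gives \(v=2v-2\Cov(A,B)\), hence \(\Cov(A,B)=v/2\). Both standard deviations equal \(\sqrt v\), which is positive because \(v>0\), so the correlation is well defined and equals \((v/2)/v=1/2\). The condition \(p\ge 2\) guarantees that a pair \(i\neq k\) among \(1,\ldots,p\) exists. Finite second moments make every variance and covariance finite, so the bilinear expansion is legitimate.

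No step is a real obstacle. The only point that needs attention is recognizing \(A-B\) as an admissible pairwise difference. The ordering convention \(0\le i<j\le p\) in the hypothesis does not matter, because the variance of a difference is symmetric in its two indices. I would close by noting that Proposition~\ref{prop:cs} is the special case \(v=2\tau^2(1-\rho_Y)\).
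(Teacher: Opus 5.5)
Your proof is correct and matches the paper's own argument: the paper also writes $2\Cov(A,B)=\Var(A)+\Var(B)-\Var(A-B)$ with $A=Y_i-Y_0$ and $B=Y_k-Y_0$, notes that $A-B=Y_i-Y_k$, and substitutes $v$ for all three variances. Your remarks on the index ordering and on why $p\ge 2$ is needed only make explicit what the paper leaves implicit.
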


\begin{proof}
The variances \(\Var(Y_i-Y_0)\) and \(\Var(Y_k-Y_0)\) equal \(v\) by
hypothesis. For random variables \(A\) and \(B\) with finite second moments,
\(\Var(A-B)=\Var(A)+\Var(B)-2\Cov(A,B)\), and rearranging gives
\begin{equation}
\label{eq:polar}
2\Cov(A,B)=\Var(A)+\Var(B)-\Var(A-B).
\end{equation}
Set \(A=Y_i-Y_0\) and \(B=Y_k-Y_0\). Then \(A-B=Y_i-Y_k\), and the three
variances on the right of~\eqref{eq:polar} equal \(v\), so
\(2\Cov(A,B)=v\). Therefore \(\Cov(A,B)=v/2\) and
\(
\Corr(A,B)
=
\frac12.
\)
\end{proof}

Proposition~\ref{prop:cs} is the compound-symmetry case of
Proposition~\ref{prop:sphere}. The variance calculation in its proof shows
that \(\Var(Y_i-Y_j)=2\tau^2(1-\rho_Y)\) for every pair, so the hypothesis of
Proposition~\ref{prop:sphere} holds with \(v=2\tau^2(1-\rho_Y)\) and the
correlation is again \(1/2\). The treatment means in a balanced design have
this compound-symmetry covariance, and the same correlation follows. Under
sphericity, Huynh and Feldt~\cite{HuynhFeldt1970} show that the
treatment-by-subject interaction mean square is independent of the treatment
contrasts and is distributed as \(\sigma^2\chi^2_\nu/\nu\) on
\(\nu=p(n-1)\) degrees of freedom. The critical values at \(\rho=1/2\)
therefore apply to every balanced spherical covariance.

The residual mean square after fitting treatments and subjects is
\begin{equation}
\label{eq:mse}
s^2
=
\frac{1}{\nu}
\sum_{i=0}^{p}\sum_{j=1}^{n}
\bigl(Y_{ij}-\bar Y_{i\cdot}-\bar Y_{\cdot j}+\bar Y_{\cdot\cdot}\bigr)^2,
\qquad
\nu=p(n-1),
\end{equation}
and \(\nu s^2/\sigma^2\sim\chi^2_\nu\)~\cite[p.~28]{Scheffe1959}.
Dunnett~\cite{Dunnett1964} uses this interaction mean square as the error term
when the blocks are random. Therefore
\[
t_i
=
\frac{D_i-(\mu_i-\mu_0)}{s\sqrt{2/n}}
\]
has the \(p\)-variate \(t\) distribution of Section~\ref{sec:integral} with
\(\nu\) degrees of freedom and \(\rho_{ij}=1/2\), and integrals~\eqref{eq:6}
and~\eqref{eq:7} are the joint coverage probabilities.

The one-sided intervals, corresponding to~\eqref{eq:4}, are the lower limits
\begin{equation}
\label{eq:intervals-one}
\bar Y_{i\cdot}-\bar Y_{0\cdot}
-
d'\,s\sqrt{2/n},
\qquad i=1,\ldots,p,
\end{equation}
or the upper limits with the sign of \(d'\) changed. The two-sided intervals,
corresponding to~\eqref{eq:5}, are
\begin{equation}
\label{eq:intervals}
\bar Y_{i\cdot}-\bar Y_{0\cdot}
\pm
d''\,s\sqrt{2/n},
\qquad i=1,\ldots,p.
\end{equation}
The standard error \(s\sqrt{2/n}\) estimates the contrast scale
\(\sqrt{2\sigma^2/n}\).

Tables~\ref{tab:d95one} and~\ref{tab:d99one} give the one-sided roots \(d'\)
of~\eqref{eq:6} at \(\rho=1/2\), and Tables~\ref{tab:d95} and~\ref{tab:d99}
the two-sided roots \(d''\) of~\eqref{eq:7}. Rounded to two decimals, the
one-sided tables reproduce Tables~1a and~1b of
Dunnett~\cite{Dunnett1955}. The row \(\nu=\infty\) is the known-variance
case.

\begin{table}[ht]
\centering
\caption{Solutions \(d'\) of the one-sided integral~\eqref{eq:6} at
\(\rho_{ij}=1/2\) and joint confidence \(P=0.95\).}
\label{tab:d95one}
\footnotesize
\setlength{\tabcolsep}{4.5pt}
\begin{tabular}{crrrrrrrrr}
\toprule
\(\nu\backslash p\) & 1 & 2 & 3 & 4 & 5 & 6 & 7 & 8 & 9 \\
\midrule
5  & 2.015 & 2.440 & 2.681 & 2.848 & 2.976 & 3.078 & 3.163 & 3.236 & 3.300 \\
10 & 1.812 & 2.151 & 2.338 & 2.466 & 2.562 & 2.640 & 2.704 & 2.759 & 2.807 \\
20 & 1.725 & 2.027 & 2.192 & 2.304 & 2.389 & 2.456 & 2.512 & 2.559 & 2.601 \\
60 & 1.671 & 1.952 & 2.104 & 2.207 & 2.284 & 2.345 & 2.395 & 2.439 & 2.476 \\
\(\infty\) &
     1.645 & 1.916 & 2.062 & 2.160 & 2.234 & 2.292 & 2.340 & 2.381 & 2.417 \\
\bottomrule
\end{tabular}
\end{table}

\begin{table}[ht]
\centering
\caption{Solutions \(d'\) of the one-sided integral~\eqref{eq:6} at
\(\rho_{ij}=1/2\) and joint confidence \(P=0.99\).}
\label{tab:d99one}
\footnotesize
\setlength{\tabcolsep}{4.5pt}
\begin{tabular}{crrrrrrrrr}
\toprule
\(\nu\backslash p\) & 1 & 2 & 3 & 4 & 5 & 6 & 7 & 8 & 9 \\
\midrule
5  & 3.365 & 3.900 & 4.211 & 4.429 & 4.597 & 4.733 & 4.846 & 4.944 & 5.030 \\
10 & 2.764 & 3.115 & 3.314 & 3.453 & 3.559 & 3.644 & 3.715 & 3.777 & 3.830 \\
20 & 2.528 & 2.813 & 2.972 & 3.082 & 3.166 & 3.233 & 3.289 & 3.337 & 3.378 \\
60 & 2.390 & 2.638 & 2.775 & 2.869 & 2.940 & 2.997 & 3.044 & 3.085 & 3.120 \\
\(\infty\) &
     2.326 & 2.558 & 2.685 & 2.772 & 2.837 & 2.889 & 2.933 & 2.970 & 3.002 \\
\bottomrule
\end{tabular}
\end{table}

\begin{table}[ht]
\centering
\caption{Solutions \(d''\) of integral~\eqref{eq:7} at \(\rho_{ij}=1/2\) and
joint confidence \(P=0.95\).}
\label{tab:d95}
\footnotesize
\setlength{\tabcolsep}{4.5pt}
\begin{tabular}{crrrrrrrrr}
\toprule
\(\nu\backslash p\) & 1 & 2 & 3 & 4 & 5 & 6 & 7 & 8 & 9 \\
\midrule
5  & 2.571 & 3.030 & 3.293 & 3.476 & 3.615 & 3.727 & 3.821 & 3.900 & 3.970 \\
10 & 2.228 & 2.568 & 2.759 & 2.890 & 2.990 & 3.070 & 3.137 & 3.194 & 3.244 \\
20 & 2.086 & 2.379 & 2.540 & 2.651 & 2.735 & 2.802 & 2.857 & 2.905 & 2.946 \\
60 & 2.000 & 2.265 & 2.410 & 2.508 & 2.582 & 2.642 & 2.691 & 2.733 & 2.769 \\
\(\infty\) &
     1.960 & 2.212 & 2.349 & 2.442 & 2.511 & 2.567 & 2.613 & 2.652 & 2.686 \\
\bottomrule
\end{tabular}
\end{table}

\begin{table}[ht]
\centering
\caption{Solutions \(d''\) of integral~\eqref{eq:7} at \(\rho_{ij}=1/2\) and
joint confidence \(P=0.99\).}
\label{tab:d99}
\footnotesize
\setlength{\tabcolsep}{4.5pt}
\begin{tabular}{crrrrrrrrr}
\toprule
\(\nu\backslash p\) & 1 & 2 & 3 & 4 & 5 & 6 & 7 & 8 & 9 \\
\midrule
5  & 4.032 & 4.627 & 4.975 & 5.219 & 5.406 & 5.557 & 5.683 & 5.792 & 5.887 \\
10 & 3.169 & 3.531 & 3.739 & 3.883 & 3.994 & 4.084 & 4.159 & 4.223 & 4.279 \\
20 & 2.845 & 3.127 & 3.285 & 3.395 & 3.479 & 3.547 & 3.603 & 3.651 & 3.694 \\
\(\infty\) &
     2.576 & 2.794 & 2.915 & 2.998 & 3.060 & 3.110 & 3.152 & 3.188 & 3.219 \\
\bottomrule
\end{tabular}
\end{table}

Rounded to two decimals, Table~\ref{tab:d95} reproduces Table~II of
Dunnett~\cite{Dunnett1964} at \(\nu=5,10,20,60,\infty\). Table~\ref{tab:d99}
reproduces Table~III at \(\nu=10,20,\infty\). At \(\nu=5\) the same rounding
agrees with Table~III except at \(p=7\) and \(p=8\), where the table prints
\(5.69\) and \(5.80\) and the roots here are \(5.683\) and \(5.792\).

Table~\ref{tab:cover95} evaluates~\eqref{eq:7} at the constants of Dunnett's
Table~2a. For \(p\le 2\) the coverage is \(0.95\), up to rounding of the
published entries. For \(p\ge 3\) the constants exceed the roots
of~\eqref{eq:7}, and the joint coverage lies between about \(0.953\) and
\(0.969\).

\begin{table}[ht]
\centering
\caption{Joint coverage under integral~\eqref{eq:7}, at \(\rho_{ij}=1/2\), of
the two-sided constants in Dunnett's Table~2a.
}
\label{tab:cover95}
\footnotesize
\setlength{\tabcolsep}{4.5pt}
\begin{tabular}{crrrrrrrrr}
\toprule
\(\nu\backslash p\) & 1 & 2 & 3 & 4 & 5 & 6 & 7 & 8 & 9 \\
\midrule
5  & 0.9500 & 0.9500 & 0.9549 & 0.9586 & 0.9616 & 0.9638 & 0.9658 & 0.9674 & 0.9690 \\
10 & 0.9502 & 0.9501 & 0.9540 & 0.9561 & 0.9588 & 0.9600 & 0.9620 & 0.9633 & 0.9644 \\
20 & 0.9504 & 0.9501 & 0.9530 & 0.9548 & 0.9572 & 0.9584 & 0.9596 & 0.9607 & 0.9613 \\
\(\infty\) &
     0.9500 & 0.9497 & 0.9526 & 0.9536 & 0.9549 & 0.9567 & 0.9572 & 0.9573 & 0.9581 \\
\bottomrule
\end{tabular}
\end{table}

At \(\nu=5\) and \(p=9\), Table~2a gives \(4.49\) and the root
of~\eqref{eq:7} is \(3.970\), so the tabled interval is about \(13\%\) longer
and its coverage is \(0.969\). Figure~\ref{fig:table2a} plots the two sets of
constants. They meet at \(p=1\) and \(p=2\), where~\eqref{eq:11} is an
equality, and separate from \(p=3\).

\begin{figure}[ht]
\centering
\begin{tikzpicture}
\begin{axis}[
  width=0.48\textwidth, height=5.6cm,
  title={\(\nu=\infty\)},
  xlabel={\(p\)}, ylabel={\(d''\)},
  xmin=1, xmax=9, ymin=1.9, ymax=2.8,
  xtick={1,...,9},
  legend pos=south east, legend style={font=\footnotesize},
  grid=major, grid style={dotted},
  tick label style={font=\footnotesize}, label style={font=\small},
]
\addplot[thick, blue, mark=*, mark size=1.5pt] coordinates
  {(1,1.960) (2,2.212) (3,2.349) (4,2.442) (5,2.511) (6,2.567) (7,2.613) (8,2.652) (9,2.686)};
\addlegendentry{root of~\eqref{eq:7}}
\addplot[thick, red, dashed, mark=square*, mark size=1.5pt] coordinates
  {(1,1.96) (2,2.21) (3,2.37) (4,2.47) (5,2.55) (6,2.62) (7,2.67) (8,2.71) (9,2.75)};
\addlegendentry{Table~2a}
\end{axis}
\end{tikzpicture}
\hfill
\begin{tikzpicture}
\begin{axis}[
  width=0.48\textwidth, height=5.6cm,
  title={\(\nu=5\)},
  xlabel={\(p\)}, ylabel={\(d''\)},
  xmin=1, xmax=9, ymin=2.4, ymax=4.7,
  xtick={1,...,9},
  legend pos=south east, legend style={font=\footnotesize},
  grid=major, grid style={dotted},
  tick label style={font=\footnotesize}, label style={font=\small},
]
\addplot[thick, blue, mark=*, mark size=1.5pt] coordinates
  {(1,2.571) (2,3.030) (3,3.293) (4,3.476) (5,3.615) (6,3.727) (7,3.821) (8,3.900) (9,3.970)};
\addlegendentry{root of~\eqref{eq:7}}
\addplot[thick, red, dashed, mark=square*, mark size=1.5pt] coordinates
  {(1,2.57) (2,3.03) (3,3.39) (4,3.66) (5,3.88) (6,4.06) (7,4.22) (8,4.36) (9,4.49)};
\addlegendentry{Table~2a}
\end{axis}
\end{tikzpicture}
\caption{Two-sided constants at \(\rho_{ij}=1/2\) and \(P=0.95\): roots of
integral~\eqref{eq:7} from Table~\ref{tab:d95}, and the constants of Dunnett's
Table~2a, which solve the bound~\eqref{eq:11}. Left, \(\nu=\infty\); right,
\(\nu=5\).}
\label{fig:table2a}
\end{figure}

At \(99\%\) and \(\nu=\infty\), the Table~2b constants have coverages from
\(0.9901\) to \(0.9910\) for \(p=1,\ldots,9\), and the roots in
Table~\ref{tab:d99} differ from the printed constants by at most \(0.031\).
At \(\nu=5\) the gap reopens: the constants \(4.63\), \(5.09\), and \(6.53\)
at \(p=2\), \(3\), and \(9\) have coverages \(0.9900\), \(0.9909\), and
\(0.9937\), against the root \(5.887\) at \(p=9\).

Table~\ref{tab:rho} solves~\eqref{eq:reduced} at \(\nu=\infty\) and \(P=0.95\)
for several contrast correlations. The probability increases with \(\rho\),
so the critical value decreases.

\begin{table}[ht]
\centering
\caption{Solutions of~\eqref{eq:reduced} at infinite degrees of freedom and
\(P=0.95\).}
\label{tab:rho}
\begin{tabular}{crrrr}
\toprule
\(\rho\backslash p\) & 2 & 3 & 5 & 9 \\
\midrule
0    & 2.236 & 2.388 & 2.569 & 2.766 \\
0.25 & 2.231 & 2.379 & 2.556 & 2.748 \\
0.50 & 2.212 & 2.349 & 2.511 & 2.686 \\
0.75 & 2.168 & 2.278 & 2.407 & 2.544 \\
\bottomrule
\end{tabular}
\end{table}

Figure~\ref{fig:rho} draws the rows of Table~\ref{tab:rho}. The balanced
compound-symmetry model stays on the curve \(\rho=1/2\) for every intra-class
correlation.

\begin{figure}[ht]
\centering
\begin{tikzpicture}
\begin{axis}[
  width=0.6\textwidth, height=6cm,
  xlabel={\(p\)}, ylabel={\(d''\)},
  xmin=2, xmax=9, ymin=2.1, ymax=2.8,
  xtick={2,3,5,9},
  legend pos=north west, legend style={font=\footnotesize},
  grid=major, grid style={dotted},
  tick label style={font=\footnotesize}, label style={font=\small},
]
\addplot[thick, orange, dotted, mark=triangle*, mark size=1.8pt] coordinates
  {(2,2.236) (3,2.388) (5,2.569) (9,2.766)};
\addlegendentry{\(\rho=0\)}
\addplot[thick, teal, dashdotted, mark=diamond*, mark size=1.8pt] coordinates
  {(2,2.231) (3,2.379) (5,2.556) (9,2.748)};
\addlegendentry{\(\rho=0.25\)}
\addplot[thick, blue, mark=*, mark size=1.5pt] coordinates
  {(2,2.212) (3,2.349) (5,2.511) (9,2.686)};
\addlegendentry{\(\rho=0.50\)}
\addplot[thick, purple, dashed, mark=square*, mark size=1.5pt] coordinates
  {(2,2.168) (3,2.278) (5,2.407) (9,2.544)};
\addlegendentry{\(\rho=0.75\)}
\end{axis}
\end{tikzpicture}
\caption{Solutions of~\eqref{eq:reduced} at \(\nu=\infty\) and \(P=0.95\) as
a function of \(p\), for the contrast correlations of Table~\ref{tab:rho}.}
\label{fig:rho}
\end{figure}

The correlation~\eqref{eq:tukey-corr} is the correlation of every family of
pairwise contrasts in which every pairwise difference has the same positive
variance.

\begin{proposition}
\label{prop:tukey}
Let \((Y_i)_{i\in I}\) have finite second moments, with \(I\) a finite index
set, and suppose \(\Var(Y_i-Y_j)=v>0\) whenever \(i\neq j\). Then, for all
\(i,j,a,b\in I\) with \(i\neq j\) and \(a\neq b\),
\begin{equation}
\label{eq:tukey-exact}
\begin{aligned}
\Corr(Y_i-Y_j,Y_a-Y_b)
&=
\frac12
\bigl(
\mathbf{1}_{\{i=a\}}+\mathbf{1}_{\{j=b\}}
-\mathbf{1}_{\{i=b\}}-\mathbf{1}_{\{j=a\}}
\bigr).
\end{aligned}
\end{equation}
\end{proposition}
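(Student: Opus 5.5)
The plan is to reduce everything to the polarization identity~\eqref{eq:polar} from the proof of Proposition~\ref{prop:sphere}, applied now to differences of differences. First I define \(g(i,j)=\Var(Y_i-Y_j)\). By hypothesis \(g(i,j)=v\) for \(i\neq j\), and trivially \(g(i,i)=0\), so \(g(i,j)=v\,\mathbf{1}_{\{i\neq j\}}\) for all \(i,j\in I\).

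Next I write the cross covariance in terms of \(g\). Expanding \(\Cov(Y_i-Y_j,Y_a-Y_b)\) by bilinearity gives four covariances \(\Cov(Y_i,Y_a)-\Cov(Y_i,Y_b)-\Cov(Y_j,Y_a)+\Cov(Y_j,Y_b)\). I substitute \(\Cov(Y_x,Y_y)=\tfrac12\bigl(\Var(Y_x)+\Var(Y_y)-g(x,y)\bigr)\), which is~\eqref{eq:polar} with \(A=Y_x\), \(B=Y_y\). The marginal variances cancel, since each of \(\Var(Y_i),\Var(Y_j),\Var(Y_a),\Var(Y_b)\) appears once with each sign. This leaves
\[
\Cov(Y_i-Y_j,Y_a-Y_b)=\tfrac12\bigl(g(i,b)+g(j,a)-g(i,a)-g(j,b)\bigr).
\]
This cancellation is the key step. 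It does not need the individual variances to be equal, which is why only the differences' variances enter the hypothesis.

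I then insert \(g=v(1-\mathbf{1}_{\{\cdot=\cdot\}})\). The constant parts \(v+v-v-v\) vanish, leaving
\[
\tfrac v2\bigl(\mathbf{1}_{\{i=a\}}+\mathbf{1}_{\{j=b\}}-\mathbf{1}_{\{i=b\}}-\mathbf{1}_{\{j=a\}}\bigr).
\]
Both standard deviations equal \(\sqrt v>0\) because \(i\neq j\) and \(a\neq b\), so dividing by \(v\) gives~\eqref{eq:tukey-exact}.

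As a final check, I would note the cases against~\eqref{eq:tukey-corr}:
\begin{itemize}
\item identical pairs give \(1\);
\item reversed pairs give \(-1\);
\item pairs sharing one index give \(\pm\tfrac12\) with the stated orientation rule;
\item disjoint pairs give \(0\).
\end{itemize}
The constraints \(i\neq j\), \(a\neq b\) rule out the combinations in which, for example, both \(\mathbf{1}_{\{i=a\}}\) and \(\mathbf{1}_{\{i=b\}}\) are nonzero. I do not expect a real obstacle. The only point needing care is the sign bookkeeping in the bilinear expansion.
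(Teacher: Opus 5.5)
Your proof is correct and follows essentially the paper's route. Like the paper, you derive the four-term identity \(2\Cov(Y_i-Y_j,Y_a-Y_b)=\Var(Y_i-Y_b)+\Var(Y_j-Y_a)-\Var(Y_i-Y_a)-\Var(Y_j-Y_b)\) by bilinearity with the marginal variances cancelling, substitute \(\Var(Y_u-Y_v)=v(1-\mathbf{1}_{\{u=v\}})\), and divide by \(v\); the only cosmetic difference is that you start from the covariance and polarize each term, while the paper starts from the variance combination and expands it.
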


\begin{proof}
For arbitrary indices, expand the variances by bilinearity:
\[
\Var(Y_u-Y_v)=\Var(Y_u)+\Var(Y_v)-2\Cov(Y_u,Y_v).
\]
The linear combination
\(\Var(Y_i-Y_b)+\Var(Y_j-Y_a)-\Var(Y_i-Y_a)-\Var(Y_j-Y_b)\)
therefore equals
\begin{align*}
-2\Cov(Y_i,Y_b)-2\Cov(Y_j,Y_a)+2\Cov(Y_i,Y_a)+2\Cov(Y_j,Y_b)
=
2\Cov(Y_i-Y_j,Y_a-Y_b).
\end{align*}
This is the identity
\begin{equation}
\label{eq:four-term}
2\Cov(Y_i-Y_j,Y_a-Y_b)
=
\Var(Y_i-Y_b)+\Var(Y_j-Y_a)
-\Var(Y_i-Y_a)-\Var(Y_j-Y_b),
\end{equation}
which uses only existence of the second moments. The diagonal case
\(u=v\) has variance \(0\), and the hypothesis gives variance \(v\) for
\(u\neq v\), so \(\Var(Y_u-Y_v)=v\bigl(1-\mathbf{1}_{\{u=v\}}\bigr)\).
Substitute these four variances into~\eqref{eq:four-term}:
\begin{align*}
2\Cov(Y_i-Y_j,Y_a-Y_b)
&=
v\Bigl[
\bigl(1-\mathbf{1}_{\{i=b\}}\bigr)
+\bigl(1-\mathbf{1}_{\{j=a\}}\bigr)
-\bigl(1-\mathbf{1}_{\{i=a\}}\bigr)
-\bigl(1-\mathbf{1}_{\{j=b\}}\bigr)
\Bigr]
\\
&=
v\bigl(
\mathbf{1}_{\{i=a\}}+\mathbf{1}_{\{j=b\}}
-\mathbf{1}_{\{i=b\}}-\mathbf{1}_{\{j=a\}}
\bigr).
\end{align*}
Both contrast variances equal \(v\) because \(i\neq j\) and \(a\neq b\), and
dividing the covariance by \(v\) produces~\eqref{eq:tukey-exact}.
\end{proof}

Equation~\eqref{eq:tukey-exact} is the closed form of~\eqref{eq:tukey-corr}.
Multiplication of every contrast by the positive constant \(1/\sqrt{v}\)
leaves the correlation unchanged, and the maximum absolute studentized
contrast has the law~\eqref{eq:tukey-student}.
The intra-class correlation does not appear in \(d_T\). The denominator is
the interaction mean square on \(\nu=(k-1)(n-1)\) degrees of freedom. With
\(k=p+1\) this is the \(\nu\) of~\eqref{eq:mse}, and the critical constant is
\(q_{P,k,\nu}/\sqrt{2}\). The ordinary analysis of variance has the same
contrast correlation and uses the residual mean square on \(k(n-1)\) degrees
of freedom. At a fixed \(\nu\) the two roots coincide.

\subsection{Reduction for the mixed model}
\label{sec:mm-reduce}

Integrals~\eqref{eq:6} and~\eqref{eq:7} are expectations of \(F\) and \(G\).
In the random-intercept model, those distribution functions are factorable due to the independence of the treatment residuals, once the control residual is given.

\subsubsection{Balanced random intercept and the multivariate-$t$ form}
\label{sec:mm-balanced}
\label{sec:mvt}

After the cancellation in~\eqref{eq:contrast-moments}, set
\begin{equation}
\label{eq:V}
V_i
=
\frac{\bar\varepsilon_{i\cdot}}{\sigma/\sqrt{n}},
\qquad i=0,\ldots,p,
\end{equation}
so that \(V_0,\ldots,V_p\) are independent standard normals, independent of
\(s\). Then
\begin{equation}
\label{eq:z-mixed}
\frac{z_i}{\sigma}
=
\frac{V_i-V_0}{\sqrt{2}},
\end{equation}
which is~\eqref{eq:factor} at \(\rho=1/2\) with \(U=-V_0\). Conditionally on
\(V_0=v\) and on \(w=s/\sigma\),
\begin{equation}
\label{eq:mm-cond-one}
F(d's,\ldots,d's)
=
\int_{-\infty}^{\infty}
\bigl[\Phi(v+d'w\sqrt{2})\bigr]^p
\phi(v)\,\d v.
\end{equation}
Both tails give
\begin{equation}
\label{eq:mm-cond-two}
G(d''s,\ldots,d''s)
=
\int_{-\infty}^{\infty}
\bigl[
\Phi(v+d''w\sqrt{2})-\Phi(v-d''w\sqrt{2})
\bigr]^p
\phi(v)\,\d v.
\end{equation}
Insert these expressions into~\eqref{eq:6} and~\eqref{eq:7}. The density
\(p(s)\,\d s\) becomes \(f_\nu(w)\,\d w\) with \(\nu=p(n-1)\), and the integrals
collapse to
\begin{equation}
\label{eq:mm-one}
P(d')
=
\int_0^{\infty}\int_{-\infty}^{\infty}
\bigl[\Phi(v+d'w\sqrt{2})\bigr]^p
\phi(v)\,f_\nu(w)\,\d v\,\d w
\end{equation}
and
\begin{equation}
\label{eq:mm-two}
P(d'')
=
\int_0^{\infty}\int_{-\infty}^{\infty}
\bigl[
\Phi(v+d''w\sqrt{2})-\Phi(v-d''w\sqrt{2})
\bigr]^p
\phi(v)\,f_\nu(w)\,\d v\,\d w.
\end{equation}
Equation~\eqref{eq:mm-one} is~\eqref{eq:reduced-one} at \(\rho=1/2\), and
\eqref{eq:mm-two} is~\eqref{eq:reduced} at \(\rho=1/2\).

The same integrals are rectangle probabilities for one multivariate \(t\)
vector, and that form carries the reduction beyond the balanced model to any
correlation that factors through a single control term.

Let \(D=(D_1,\ldots,D_p)\) be the vector of centered Dunnett contrasts, jointly
normal with
\[
\Var(D_i)=\sigma_*^2\,\Sigma_{ii},
\qquad
\Cov(D_i,D_k)=\sigma_*^2\,\Sigma_{ik},
\]
and let \(\nu s^2/\sigma_*^2\sim\chi^2_\nu\) be independent of \(D\). Define the
studentized coordinates by their own standard errors,
\begin{equation}
\label{eq:t-mvt}
t_i
=
\frac{D_i}{s\sqrt{\Sigma_{ii}}}.
\end{equation}
The numerators
\(z_i=D_i/\sqrt{\Sigma_{ii}}\) then satisfy \(\Var(z_i)=\sigma_*^2\) and
\[
\Corr(z_i,z_k)
=
R_{ik}
=
\frac{\Sigma_{ik}}{\sqrt{\Sigma_{ii}\,\Sigma_{kk}}},
\qquad
R_{ii}=1.
\]
The vector \(t\) has the central multivariate \(t\) distribution with \(\nu\)
degrees of freedom and correlation \(R\): it is a normal vector with
covariance \(\sigma_*^2 R\), divided by the independent scale \(s\). Dunnett's
integral~\eqref{eq:6} is the distribution function of this law at the
equicoordinate point \((d',\ldots,d')\). Integral~\eqref{eq:7} is the
probability of the cube \([-d'',d'']^p\). Writing \(w=s/\sigma_*\) and letting
\(Z\sim N(0,R)\),
\begin{equation}
\label{eq:mvt}
\begin{aligned}
P(d')
&=
\int_0^{\infty}
\Prob(Z_1<d'w,\ldots,Z_p<d'w)\,f_\nu(w)\,\d w,
\\
P(d'')
&=
\int_0^{\infty}
\Prob(|Z_1|<d''w,\ldots,|Z_p|<d''w)\,f_\nu(w)\,\d w.
\end{aligned}
\end{equation}
Equations~\eqref{eq:6} and~\eqref{eq:7} are~\eqref{eq:mvt}.

\paragraph{Product correlation.}
Suppose the margins at different levels of the Dunnett factor are independent
and the control margin is the only shared term, so
\(\Sigma_{ii}=v_i+v_0\) and \(\Sigma_{ik}=v_0\). Then
\begin{equation}
\label{eq:R-lambda}
R_{ik}=\lambda_i\lambda_k,
\qquad
\lambda_i=\sqrt{\frac{v_0}{v_0+v_i}}.
\end{equation}
The normal vector in~\eqref{eq:mvt} has the one-factor representation
\(Z_i=\lambda_i U+\sqrt{1-\lambda_i^2}\,V_i\) with \(U,V_i\) independent and
standard normal. Conditionally on \(U=u\), the coordinates separate and the
\(p\)-dimensional probability becomes a product of univariate terms. The
multivariate-\(t\) integral collapses to the double integral
\begin{equation}
\label{eq:mvt-lambda}
\begin{aligned}
P(d')
&=
\int_0^{\infty}\int_{-\infty}^{\infty}
\prod_{i=1}^{p}
\Phi\!\left(
\frac{d'w-\lambda_i u}{\sqrt{1-\lambda_i^2}}
\right)
\phi(u)\,f_\nu(w)\,\d u\,\d w,
\\[4pt]
P(d'')
&=
\int_0^{\infty}\int_{-\infty}^{\infty}
\prod_{i=1}^{p}
\Biggl[
\Phi\!\left(
\frac{d''w+\lambda_i u}{\sqrt{1-\lambda_i^2}}
\right)
-
\Phi\!\left(
\frac{-d''w+\lambda_i u}{\sqrt{1-\lambda_i^2}}
\right)
\Biggr]
\phi(u)\,f_\nu(w)\,\d u\,\d w.
\end{aligned}
\end{equation}
The one-sided line is~\eqref{eq:reduced-lambda}. Equal loadings
\(\lambda_i=\sqrt{1/2}\) return~\eqref{eq:mm-one} and~\eqref{eq:mm-two}.
The same double integral is the coverage probability whenever
\eqref{eq:R-lambda} holds, whether or not the \(v_i\) are equal.

\subsubsection{Unequal replication with the same cancellation}
\label{sec:mm-lambda}

If treatment \(i\) is replicated \(r_i\) times on every subject, the subject
effects still cancel. The loadings are
\(\lambda_i=(1+r_0/r_i)^{-1/2}\), as in~\eqref{eq:lambda}, and the integral
is~\eqref{eq:mvt-lambda}. The ratio \(\sigma_b^2/\sigma^2\) is absent.

\subsubsection{Conditioning on the random effects that remain}
\label{sec:mm-cond}

The passage from the \(p\)-dimensional probability \(F\) to the inner
integral of~\eqref{eq:mm-one} uses only the conditional independence of the
treatment residuals. The same step applies when a subject effect is still
present in the contrast, once that effect is placed in the conditioning
variable.

\begin{proposition}
\label{prop:cond}
Let \(\sigma>0\), and let \(z_1,\ldots,z_p\) and \(s>0\) be random variables.
Suppose that a random vector \(\xi\), measurable real functions \(a_i\),
constants \(c_i>0\), and random variables \(\eta_1,\ldots,\eta_p\) satisfy
\[
\frac{z_i}{\sigma}=a_i(\xi)+c_i\eta_i,
\qquad i=1,\ldots,p,
\]
that \(\eta_1,\ldots,\eta_p\) are independent standard normal variables, and
that \((\eta_1,\ldots,\eta_p)\) is independent of \((\xi,s)\). Set
\(w=s/\sigma\). Then, almost surely,
\begin{equation}
\label{eq:cond-general}
\begin{aligned}
&\Prob(z_1<d's,\ldots,z_p<d's\mid\xi,s) = \prod_{i=1}^{p} \Phi\!\left(\frac{d'w-a_i(\xi)}{c_i}\right),
\\[4pt]
&\Prob\bigl(|z_1|<d''s,\ldots,|z_p|<d''s\bigm|\xi,s\bigr) = \prod_{i=1}^{p} 
\Biggl[ 
\Phi\!\left(\frac{d''w-a_i(\xi)}{c_i}\right)
-
\Phi\!\left(\frac{-d''w-a_i(\xi)}{c_i}\right)
\Biggr].
\end{aligned}
\end{equation}
If \(s\) is also independent of \(\xi\) and has density \(p(s)\), then \(s\)
is independent of \((z_1,\ldots,z_p)\), and the integrals~\eqref{eq:6}
and~\eqref{eq:7} for this vector equal the expectations of the two products.
\end{proposition}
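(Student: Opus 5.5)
The plan is to compute the conditional probability as an expectation over \(\eta\) with \((\xi,s)\) frozen, which is legitimate because \(\eta\) is independent of \((\xi,s)\). By the substitution (freezing) lemma for conditional expectations, for any bounded measurable \(h\),
\[
\E\bigl[h(\xi,s,\eta)\mid\xi,s\bigr]=g(\xi,s),
\qquad
g(x,t)=\E\, h(x,t,\eta),
\]
almost surely. I would apply this with \(h\) the indicator of the event \(\{a_i(x)+c_i\eta_i<d't/\sigma\text{ for all }i\}\). That event is \(\{z_i<d's\text{ for all }i\}\) after dividing by \(\sigma>0\) and writing \(w=s/\sigma\). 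Measurability of \(h\) follows from measurability of the \(a_i\). For fixed \((x,t)\), the event is an intersection of independent events \(\{\eta_i<(d't/\sigma-a_i(x))/c_i\}\), using \(c_i>0\) so the inequality direction is preserved. Hence \(g\) is the product of the \(\Phi\) terms, which gives the first line of~\eqref{eq:cond-general}.

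The two-sided line is the same argument with the event \(\{-d''w<a_i(x)+c_i\eta_i<d''w\}\). Its probability is
\[
\Phi\bigl((d''w-a_i)/c_i\bigr)-\Phi\bigl((-d''w-a_i)/c_i\bigr),
\]
since \(\Phi\) is continuous, so strict and non-strict inequalities agree. If \(d''w\le 0\) this difference should be read as \(0\). Because \(s>0\), that happens only when \(d''\le 0\), where both sides vanish, so I would note this as a trivial case.

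For the final claim, suppose \(s\) is also independent of \(\xi\). Then \(\xi\), \(s\), \(\eta\) are mutually independent: \((\xi,s)\) is independent of \(\eta\), and \(\xi\) is independent of \(s\). The vector \(z\) is a measurable function of \((\xi,\eta)\), which is independent of \(s\), so \(s\) is independent of \(z\). Then~\eqref{eq:6} holds by the law of total probability conditioning on \(s\). Equivalently, by the tower property, the unconditional probability equals \(\E\) of the conditional probability given \((\xi,s)\), which is the expectation of the product; the same holds for~\eqref{eq:7}.

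The main obstacle is technical rather than computational. It is justifying the freezing step rigorously for a general random vector \(\xi\) and producing an almost-sure statement. I would handle it by citing the standard independence lemma, or by verifying directly that \(g(\xi,s)\) is \(\sigma(\xi,s)\)-measurable and that \(\E[\mathbf 1_A\,g(\xi,s)]=\Prob(A\cap\{\text{event}\})\) for \(A\in\sigma(\xi,s)\). The direct verification works by Fubini applied to the product law of \((\xi,s)\) and \(\eta\).
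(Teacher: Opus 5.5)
Your proposal is correct and follows the paper's proof. You condition on \((\xi,s)\), use the independence of \(\eta\) to factor the event into univariate normal probabilities, obtain independence of \(s\) from \((\xi,\eta)\) and hence from \(z\), and finish with the law of total expectation; your explicit freezing lemma and your remark that the two-sided difference must be read as \(0\) when \(d''<0\) are refinements the paper leaves implicit.
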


\begin{proof}
Independence of \((\eta_1,\ldots,\eta_p)\) and \((\xi,s)\) preserves the
independence and the standard normal laws after conditioning on \((\xi,s)\).
Because \(c_i>0\),
\[
\{z_i<d's\}
=
\left\{
\eta_i<\frac{d'w-a_i(\xi)}{c_i}
\right\}.
\]
The conditional probability of the \(i\)th event is the normal distribution
function in the first product, and conditional independence of the \(\eta_i\)
multiplies these probabilities. The same conditioning writes the two-sided
event as the interval
\[
\frac{-d''w-a_i(\xi)}{c_i}
<
\eta_i
<
\frac{d''w-a_i(\xi)}{c_i},
\]
whose probability is the difference of distribution functions in the second
product.

Each \(z_i\) is a function of \((\xi,\eta_i)\). For bounded measurable
functions \(f\), \(g\), and \(h\),
\begin{align*}
\E\bigl[f(s)\,g(\xi)\,h(\eta_1,\ldots,\eta_p)\bigr]
&=
\E\bigl[f(s)\,g(\xi)\bigr]\,\E\bigl[h(\eta_1,\ldots,\eta_p)\bigr],
\end{align*}
by independence of \((\eta_1,\ldots,\eta_p)\) and \((\xi,s)\). The further
assumption that \(s\) is independent of \(\xi\) factors
\(\E[f(s)g(\xi)]\), so \(s\) is independent of \((\xi,\eta_1,\ldots,\eta_p)\)
and hence of \((z_1,\ldots,z_p)\). Independence of \(s\) and
\((z_1,\ldots,z_p)\) gives
\[
\Prob(z_1<d's,\ldots,z_p<d's)
=
\int_{-\infty}^{\infty} F(d's,\ldots,d's)\,p(s)\,\d s,
\]
and the same identity with absolute values and with \(G\) in place of \(F\).
The law of total expectation equates the left-hand sides with the
expectations of the conditional probabilities in~\eqref{eq:cond-general}.
\end{proof}

In the balanced model, \(\xi=V_0\), \(\eta_i=V_i\),
\(a_i(\xi)=-V_0/\sqrt{2}\), and \(c_i=1/\sqrt{2}\). The first conditional
probability in~\eqref{eq:cond-general} equals
\(\bigl[\Phi(V_0+d'w\sqrt{2})\bigr]^p\), and its expectation in \(V_0\)
is~\eqref{eq:mm-cond-one}. The expectation of the second product
is~\eqref{eq:mm-cond-two}. The proposition is the reduction for any mixed
model whose contrasts, once the retained random effects are fixed, are
independent residual contrasts.

The split-plot simple effects of the between-subjects factor, taken at all
\(q\) levels of the repeated factor, are the case in which \(\xi\) is no
longer a single control residual. With equal group size \(n\) and one
observation per cell, put \(\tau^2=\sigma_s^2+\sigma^2\) and
\(\rho_Y=\sigma_s^2/\tau^2\), and let \(U_i\) and \(V_{ij}\) be independent
standard normals. The cell means admit the representation
\[
\frac{\bar Y_{ij\cdot}-\mu_{ij}}{\tau/\sqrt{n}}
=
\sqrt{\rho_Y}\,U_i+\sqrt{1-\rho_Y}\,V_{ij}.
\]
The Dunnett contrast at treatment \(i\) and level \(j\), divided by its
standard error \(\tau\sqrt{2/n}\), is \(C_{ij}/\sqrt{2}\) with
\[
C_{ij}
=
\sqrt{\rho_Y}\,(U_i-U_0)+\sqrt{1-\rho_Y}\,(V_{ij}-V_{0j}).
\]
The variables that are not free residuals are
\(\xi=(U_0,\ldots,U_p,V_{01},\ldots,V_{0q})\). Conditionally on \(\xi\), each
\(V_{ij}\) for \(i\ge 1\) appears in only one contrast, and
Proposition~\ref{prop:cond} gives a product of \(pq\) univariate factors.
For known variance components, \(w=1\) and integral~\eqref{eq:7} reduces to
\begin{equation}
\label{eq:mm-cross}
P(d'')
=
\int
\prod_{i=1}^{p}\prod_{j=1}^{q}
\Bigl[
\Phi\!\left(\alpha_{ij}+\frac{d''\sqrt{2}}{\sqrt{1-\rho_Y}}\right)
-
\Phi\!\left(\alpha_{ij}-\frac{d''\sqrt{2}}{\sqrt{1-\rho_Y}}\right)
\Bigr]
\phi(u)\,\phi(v_0)\,\d u\,\d v_0,
\end{equation}
where the integral runs over \(u\in\mathbb{R}^{p+1}\) and
\(v_0\in\mathbb{R}^{q}\), and
\[
\alpha_{ij}
=
V_{0j}
-
\frac{\sqrt{\rho_Y}\,(U_i-U_0)}{\sqrt{1-\rho_Y}}.
\]
The one-sided integral~\eqref{eq:6} keeps only the upper \(\Phi\)
in each factor. The dimension of the integral is \(p+1+q\), one variable for
each group mean and one for each control residual, rather than the dimension
\(pq\) of \(F\) and \(G\). At \(q=1\) the vector of contrasts is equicorrelated
with correlation \(1/2\) for every \(\rho_Y\), by Proposition~\ref{prop:cs},
and~\eqref{eq:mm-cross} collapses further to~\eqref{eq:mm-two}.

When the variance components are estimated, the exact chi-square denominator of the balanced model is replaced by a Satterthwaite approximation, and~\eqref{eq:mm-cross} is evaluated at \(d''w\) with that degrees of freedom. The factorisation \eqref{eq:cond-general} is unchanged: only the law of \(w\), and the value of \(\rho_Y\) inside \(\alpha_{ij}\), become approximate.

\subsection{The unbalanced case}
\label{sec:unbalanced}

Balance in model~\eqref{eq:lmm} means that every subject is observed once
under the control and once under each treatment. Two different departures
from that pattern have to be kept apart, because only one of them lets
\(\sigma_b^2\) into the correlation.

\subsubsection{Unequal replication when blocks cancel}
\label{sec:alloc}

Suppose subject \(j=1,\ldots,n\) is still observed under every treatment, but
treatment \(i\) is replicated \(r_i\) times within the subject. The cell mean
on subject \(j\) is \(\bar Y_{ij\cdot}=\mu_i+b_j+\bar\varepsilon_{ij\cdot}\),
with \(\Var(\bar\varepsilon_{ij\cdot})=\sigma^2/r_i\). Averaging over the
\(n\) subjects and forming \(D_i=\bar Y_{i\cdot\cdot}-\bar Y_{0\cdot\cdot}\),
the subject effects cancel exactly as in~\eqref{eq:contrast-moments}. What
remains is
\begin{equation}
\label{eq:var-unbal}
\Var(D_i)=\frac{\sigma^2}{n}\Bigl(\frac{1}{r_i}+\frac{1}{r_0}\Bigr),
\qquad
\Cov(D_i,D_k)=\frac{\sigma^2}{n r_0}.
\end{equation}
Hence
\begin{equation}
\label{eq:lambda}
\lambda_i
=
\sqrt{\frac{1/r_0}{1/r_i+1/r_0}}
=
\bigl(1+r_0/r_i\bigr)^{-1/2},
\qquad
\rho_{ik}=\lambda_i\lambda_k.
\end{equation}
The ratio \(\sigma_b^2/\sigma^2\) does not appear. The same loadings are
Dunnett's formula~\eqref{eq:rho-oneway} for independent groups. The one-sided
probability is
\begin{equation}
\label{eq:reduced-lambda}
P(d')
=
\int_0^{\infty}\int_{-\infty}^{\infty}
\prod_{i=1}^{p}
\Phi\!\left(
\frac{d'w-\lambda_i u}{\sqrt{1-\lambda_i^2}}
\right)
\phi(u)\,f_\nu(w)\,\d u\,\d w,
\end{equation}
and the two-sided probability replaces each factor by the difference of
\(\Phi\) values in~\eqref{eq:cond-two}. Equal replication returns
\(\lambda_i=1/\sqrt{2}\). In an independent-group layout the degrees of
freedom are \(\sum n_i-(p+1)\).

If \(r_1=\cdots=r_p=r\) and only the control replication differs, then
\(\rho=1/(1+r_0/r)\) and the matrix is equicorrelated.
Table~\ref{tab:alloc} solves the two-sided integral at \(P=0.95\) and
\(\nu=\infty\). Dunnett's recommendation
\(r_0/r\approx\sqrt{p}\)~\cite[Section~V]{Dunnett1955}, repeated
in~\cite{Dunnett1964}, is the row \(\rho=1/(1+\sqrt{p})\).

\begin{table}[ht]
\centering
\caption{Two-sided \(d''\) at \(P=0.95\) and \(\nu=\infty\), when every
treatment has the same replication and the control has \(r_0/r\) times as
many. Then \(\rho=1/(1+r_0/r)\).}
\label{tab:alloc}
\begin{tabular}{crrrrr}
\toprule
\(r_0/r\) & \(\rho\) & \(p=2\) & \(p=3\) & \(p=5\) & \(p=9\) \\
\midrule
1 & 0.500 & 2.212 & 2.349 & 2.511 & 2.686 \\
\(\sqrt{3}\) & 0.366 & 2.224 & 2.368 & 2.540 & 2.726 \\
2 & 0.333 & 2.227 & 2.372 & 2.546 & 2.734 \\
4 & 0.200 & 2.233 & 2.382 & 2.561 & 2.755 \\
\bottomrule
\end{tabular}
\end{table}

When the treatment replications differ, the \(\lambda_i\) differ. For
independent groups of sizes \((n_0,n_1,n_2,n_3)=(20,20,10,5)\),
\[
\lambda=(0.707,\,0.577,\,0.447)
\]
and the two-sided root at \(P=0.95\), \(\nu=\infty\) is \(2.372\), against
\(2.349\) at \(\rho=1/2\) and \(2.388\) at \(\rho=0\). Using \(1/2\) when the
control is more heavily replicated makes \(d''\) slightly too small.

\subsubsection{Incomplete subject sets}
\label{sec:incomplete}

The variance ratio \(\gamma=\sigma_b^2/\sigma^2\) enters the correlation when
different subjects are observed on different sets of treatments, because a
subject effect then cannot be removed from every comparison. Generalized
least squares with the compound-symmetry covariance of each observed set
produces the Dunnett correlation from \(L(X'V^{-1}X)^{-1}L'\).

If the \(p\) treatments are exchangeable, that correlation is a single
\(\rho\) depending on \(\gamma\). At \(\gamma=0\) it is the enlarged-control
value~\eqref{eq:lambda}. As \(\gamma\to\infty\), a control-only subject adds
nothing to a within-subject comparison and \(\rho\) returns to \(1/2\).

For \(p=2\) a one-factor representation always exists. For \(p\ge 3\) it need
not: no single vector \((\lambda_i)\) need reproduce the correlation, so
neither~\eqref{eq:reduced} nor~\eqref{eq:reduced-lambda} is the probability.
The integral is still~\eqref{eq:6} or~\eqref{eq:7}, computed from the full
correlation by methods of Genz and Bretz~\cite{GenzBretz2009} or Edwards and Berry~\cite{EdwardsBerry1987}. 

When \(\gamma\) and \(\sigma^2\) are replaced by restricted-maximum-likelihood
estimates, the denominator degrees of freedom are the Kenward-Roger
approximation~\cite{KenwardRoger1997}, or a Satterthwaite
approximation~\cite{Satterthwaite1946} when only one variance component is
estimated. Integrals~\eqref{eq:6} and~\eqref{eq:7} are evaluated at those
estimated correlations and degrees of freedom. Hothorn, Bretz, and
Westfall~\cite{HothornBretzWestfall2008} obtain these critical values from
the multivariate \(t\) with the estimated correlation of the comparisons in a
general parametric model. The R package \texttt{multcomp} applies that
procedure to linear mixed models.

\subsection{Other covariance structures}
\label{sec:cov}

The argument of \(F\) and \(G\) is the correlation of the Dunnett contrasts.
A covariance model changes integrals~\eqref{eq:6} and~\eqref{eq:7} when it
changes that correlation, or when it removes the single chi-square scale.
Jennrich and Schluchter~\cite{JennrichSchluchter1986} and
Wolfinger~\cite{Wolfinger1996} list the structures in ordinary use for
repeated measures: compound symmetry, heterogeneous compound symmetry,
autoregression, Toeplitz and ante-dependence forms, random coefficients,
factor-analytic residuals, and an unstructured matrix. In every case
Dunnett's integrals are~\eqref{eq:mvt} at the correlation \(R\) of the
contrasts. The reduction replaces the \(p\)-dimensional probability inside
that integral by an average, under Proposition~\ref{prop:cond}, of a product
of univariate normal probabilities. The dimension of the average is the
number of latent variables the contrasts share.

\subsubsection{A covariance shared by every treatment}
\label{sec:cov-common}

Comparisons across subjects are a different family from comparisons among
repeated measures on one subject. Suppose subjects are independent, every
treatment is observed on the same within-subject design, and the Dunnett
statistic is built from one common linear summary \(T_i=c'\bar Y_i\) of that
design. The summary can be the mean over the repeated levels, or any other
fixed contrast in those levels.

\begin{proposition}
\label{prop:common}
Let \(T_0,\ldots,T_p\) be pairwise uncorrelated, with \(\Var(T_i)=\tau^2>0\)
for every \(i\). Then \(\Var(T_i-T_0)=2\tau^2\) for every \(i=1,\ldots,p\),
and, for all \(1\le i\neq k\le p\),
\[
\Cov(T_i-T_0,T_k-T_0)=\tau^2,
\qquad
\Corr(T_i-T_0,T_k-T_0)=\frac12.
\]
\end{proposition}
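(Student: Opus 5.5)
The plan is to reduce the statement to Proposition~\ref{prop:cs} with \(\rho_Y=0\), or equivalently to Proposition~\ref{prop:sphere} with \(v=2\tau^2\). Both reductions only need the second moments of \(T_0,\ldots,T_p\), and these are fully given by the hypotheses. The covariance matrix of \((T_0,\ldots,T_p)'\) is \(\tau^2 I_{p+1}\). This is the compound-symmetry matrix \(\tau^2\bigl((1-\rho_Y)I_{p+1}+\rho_Y 11'\bigr)\) at \(\rho_Y=0\), which lies in the admissible range \(-1/p\le\rho_Y<1\).

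I would nevertheless write out the direct computation, since it is two lines and avoids the restriction \(p\ge 1\) versus \(p\ge 2\) in the cited propositions.

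\textbf{Variance of each contrast.} For \(i\ge 1\), expand
\(\Var(T_i-T_0)=\Var(T_i)+\Var(T_0)-2\Cov(T_i,T_0)\). Pairwise uncorrelatedness removes the covariance term, giving \(2\tau^2\).

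\textbf{Covariance between two contrasts.} For \(1\le i\neq k\le p\), bilinearity gives
\[
\Cov(T_i-T_0,T_k-T_0)=\Cov(T_i,T_k)-\Cov(T_i,T_0)-\Cov(T_0,T_k)+\Var(T_0).
\]
The first three terms vanish because \(i\), \(k\) and \(0\) are distinct, so the covariance is \(\tau^2\).

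\textbf{Correlation.} Dividing by the product of the two standard deviations \(\sqrt{2\tau^2}\cdot\sqrt{2\tau^2}=2\tau^2\) gives the correlation \(1/2\). This division is legitimate because \(\tau^2>0\).

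There is no genuine obstacle. The only point to check is that finite second moments are implicit in the hypothesis \(\Var(T_i)=\tau^2\), so bilinearity of covariance applies. In the context of the paper, \(T_i=c'\bar Y_i\) across independent subjects, so uncorrelatedness holds automatically and a common \(\tau^2=c'\Sigma c/n\) follows from the shared within-subject covariance \(\Sigma\). That application belongs to the surrounding discussion rather than to the proof.
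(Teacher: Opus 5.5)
Your proof is correct and is essentially the paper's: the paper invokes Proposition~\ref{prop:cs} at $\rho_Y=0\in[-1/p,1)$, and your bilinearity computation is that proposition's proof specialized to $\rho_Y=0$. The worry about $p\ge 1$ versus $p\ge 2$ is unnecessary, because Proposition~\ref{prop:cs} is stated for $p\ge 1$ and the covariance claim is vacuous when $p=1$.
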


\begin{proof}
Pairwise uncorrelatedness means \(\Cov(T_i,T_k)=0\) for \(i\neq k\). The
vector \((T_0,\ldots,T_p)\) therefore satisfies the hypotheses of
Proposition~\ref{prop:cs} with intra-class correlation \(\rho_Y=0\), which
lies in \([-1/p,1)\). Substituting \(\rho_Y=0\) into the moments of
Proposition~\ref{prop:cs} gives \(\Var(T_i-T_0)=2\tau^2\),
\(\Cov(T_i-T_0,T_k-T_0)=\tau^2\), and correlation \(1/2\). 
\end{proof}

The Proposition~\ref{prop:common} holds, in particular, when \(T_0,\ldots,T_p\) are independent.

Autoregression, a Toeplitz matrix, an unstructured matrix, a random slope,
and a Kronecker product all fall under this proposition when they are the
within-subject covariance of every treatment. The critical values are the
entries of Tables~\ref{tab:d95one} and~\ref{tab:d95}. The covariance enters the standard error
\(\tau\), and it enters the degrees of freedom only through the estimator of
\(\tau^2\). A single variance component estimated by one mean square keeps
\(w=s/\tau\) on the density~\eqref{eq:wdensity}. Integrals~\eqref{eq:6}
and~\eqref{eq:7} therefore reduce to~\eqref{eq:mm-one} and~\eqref{eq:mm-two}:
the contrast correlation is \(1/2\), and the covariance survives only as the
scale \(\tau\) inside \(w\). Two or more estimated components replace
\(f_\nu\) by a Satterthwaite or Kenward-Roger approximation.
The reduced integrands are unchanged.

The remainder of this section is the other family: Dunnett comparisons among
the repeated levels themselves, with level \(0\) as the control, or
independent groups whose variances differ.

\subsubsection{Heterogeneous compound symmetry}
\label{sec:cov-csh}

Heterogeneous compound symmetry (CSH) gives level \(j\) its own marginal standard
deviation \(\sigma_j\) and gives every pair the same correlation \(\rho\):
\begin{equation}
\label{eq:csh-cov}
\Cov(Y_j,Y_k)
=
\begin{cases}
\sigma_j^2, & j=k,\\
\rho\,\sigma_j\sigma_k, & j\neq k.
\end{cases}
\end{equation}
The representation
\begin{equation}
\label{eq:csh-rep}
Y_j
=
\mu_j+\sigma_j\bigl(\sqrt{\rho}\,U+\sqrt{1-\rho}\,V_j\bigr),
\end{equation}
with \(U,V_0,V_1,\ldots\) independent standard normals, reproduces
\eqref{eq:csh-cov}. The baseline contrasts \(D_i=Y_i-Y_0\) then have
\begin{equation}
\label{eq:csh-moments}
\begin{aligned}
\Var(D_i)
&=
\sigma_i^2+\sigma_0^2-2\rho\,\sigma_i\sigma_0,
\\
\Cov(D_i,D_k)
&=
\rho(\sigma_i-\sigma_0)(\sigma_k-\sigma_0)+(1-\rho)\sigma_0^2.
\end{aligned}
\end{equation}
Equal marginal variances reduce~\eqref{eq:csh-moments} to correlation \(1/2\),
for every \(\rho\), which is Proposition~\ref{prop:sphere}. Unequal variances
leave a correlation that depends on the whole list \((\sigma_j)\) and on
\(\rho\).

The shared variables in~\eqref{eq:csh-rep} are \(U\) and the control residual
\(V_0\). Conditionally on \((U,V_0)=(u,v)\), the contrasts are independent,
the conditional mean of \(D_i\) is
\(\sqrt{\rho}\,(\sigma_i-\sigma_0)u-\sigma_0\sqrt{1-\rho}\,v\),
and the conditional standard deviation is \(\sigma_i\sqrt{1-\rho}\).
Proposition~\ref{prop:cond} therefore supplies a product of univariate normal
probabilities. For known \((\sigma_j,\rho)\), the two-sided probability is
\begin{equation}
\label{eq:csh-two}
P(d'') =
\int_{-\infty}^{\infty}\int_{-\infty}^{\infty}
\prod_{i=1}^{p}
\Bigl[
\Phi\!\left(\frac{d''s_i-m_i(u,v)}{\sigma_i\sqrt{1-\rho}}\right)
-
\Phi\!\left(\frac{-d''s_i-m_i(u,v)}{\sigma_i\sqrt{1-\rho}}\right)
\Bigr]
\phi(u)\phi(v)\d u\d v,
\end{equation}
where \(s_i=\sqrt{\Var(D_i)}\) and
\(m_i(u,v)=\sqrt{\rho}\,(\sigma_i-\sigma_0)u-\sigma_0\sqrt{1-\rho}\,v\).
The one-sided integral is the same average with only the upper tail kept,
\begin{equation}
\label{eq:csh-one}
P(d')
=
\int_{-\infty}^{\infty}\int_{-\infty}^{\infty}
\prod_{i=1}^{p}
\Phi\!\left(\frac{d's_i-m_i(u,v)}{\sigma_i\sqrt{1-\rho}}\right)
\phi(u)\,\phi(v)\,\d u\,\d v.
\end{equation}
Before this conditioning, both probabilities are the \(p\)-dimensional cube
probabilities in~\eqref{eq:mvt} at the correlation read from
\eqref{eq:csh-moments}, with \(w=1\). The conditioning replaces that cube by
an integral of dimension two.

If the variance ratios \(a_j=\sigma_j/\sigma\) and the correlation \(\rho\)
are known, then \(\Sigma=\sigma^2 A\) with \(A\) known. An estimator \(s\)
independent of the contrasts and distributed as \(\sigma\sqrt{\chi^2_\nu/\nu}\)
restores the outer factor of Dunnett's integrals. Replace \(d'\) by \(d'w\)
and \(d''\) by \(d''w\), and mix~\eqref{eq:csh-one} and~\eqref{eq:csh-two}
against \(f_\nu(w)\):
\begin{equation}
\label{eq:csh-w}
P(d')
= \int_0^{\infty} P(d'w\mid a,\rho)\,f_\nu(w)\,\d w,
\quad
P(d'')
=
\int_0^{\infty} P(d''w\mid a,\rho)\,f_\nu(w)\,\d w.
\end{equation}
The symbols \(P(d'w\mid a,\rho)\) and \(P(d''w\mid a,\rho)\) are
\eqref{eq:csh-one} and~\eqref{eq:csh-two} evaluated at those scaled critical
values, with every \(\sigma_j\) replaced by the known ratio \(a_j\).
A fit in which \((a_j,\rho)\) are estimated does not leave this chi-square
scale. The same product is then evaluated at the fitted parameters, with a
Kenward-Roger or Satterthwaite law in place of \(f_\nu\).

A different heteroscedastic model keeps a common subject effect and gives the
residual at level \(j\) variance \(\psi_j\). The subject effect cancels in
\(D_i\). Set \(U=-V_0\) and
\[
\frac{D_i}{\sqrt{\psi_i+\psi_0}}
=
\lambda_i U+\sqrt{1-\lambda_i^2}\,V_i,
\qquad
\lambda_i=\sqrt{\frac{\psi_0}{\psi_0+\psi_i}}.
\]
The ratio \(\sigma_b^2/\sigma^2\) is absent. Integrals~\eqref{eq:6}
and~\eqref{eq:7} therefore reduce exactly to the one-factor
integrals~\eqref{eq:mvt-lambda}, with these loadings and with \(w=s/\sigma\)
on \(\nu\) degrees of freedom from the residual mean square of the \(\psi_j\)
when those ratios are known. The constant-correlation model~\eqref{eq:csh-cov}
is the one that requires the two latent variables in~\eqref{eq:csh-one}
and~\eqref{eq:csh-two}.

\subsubsection{Autoregression, Toeplitz structure, and ante-dependence}
\label{sec:cov-ar}

On equally spaced levels, stationary AR(1) has
\(\Corr(Y_j,Y_k)=\rho^{|j-k|}\) and a common variance \(\sigma^2\). The
baseline contrasts have
\begin{equation}
\label{eq:ar-cov}
\Var(D_i)
=
2\sigma^2(1-\rho^i),
\qquad
\Cov(D_i,D_k)
=
\sigma^2\bigl(1-\rho^i-\rho^k+\rho^{|i-k|}\bigr).
\end{equation}
For two follow-up levels the correlation simplifies to
\begin{equation}
\label{eq:ar-two}
\Corr(D_1,D_2)=\rho_D=\frac{\sqrt{1+\rho}}{2}.
\end{equation}
The pair is equicorrelated, so the \(p\)-dimensional factor in~\eqref{eq:mvt}
reduces to one normal integral. With \(\rho\) known, \(\Sigma=\sigma^2 A(\rho)\)
and an independent scale estimator \(s=\sigma w\) puts \(f_\nu\) on the outside.
Dunnett's integrals collapse to~\eqref{eq:reduced-one} and~\eqref{eq:reduced}
at this \(\rho_D\):
\begin{equation}
\label{eq:ar-reduced}
\begin{aligned}
P(d')
&=
\int_0^{\infty}\int_{-\infty}^{\infty}
\left[
\Phi\!\left(\frac{d'w-\sqrt{\rho_D}\,u}{\sqrt{1-\rho_D}}\right)
\right]^2
\phi(u)\,f_\nu(w)\,\d u\,\d w,
\\
P(d'')
&=
\int_0^{\infty}\int_{-\infty}^{\infty}
\left[
\Phi\!\left(\frac{d''w+\sqrt{\rho_D}\,u}{\sqrt{1-\rho_D}}\right)
-
\Phi\!\left(\frac{-d''w+\sqrt{\rho_D}\,u}{\sqrt{1-\rho_D}}\right)
\right]^2
\phi(u) f_\nu(w)\,\d u\,\d w.
\end{aligned}
\end{equation}
For three follow-up levels the same conditioning produces~\eqref{eq:mvt-lambda}
whenever the loadings defined by \(R_{ik}=\lambda_i\lambda_k\) satisfy
\(\lambda_i^2\le 1\). For four or more follow-up levels those loadings need
not exist. Integrals~\eqref{eq:6} and~\eqref{eq:7} then remain~\eqref{eq:mvt}
at~\eqref{eq:ar-cov}: given the baseline, the later levels are still
autoregressive, and the inner probability stays of dimension \(p\).

A Toeplitz covariance keeps equal variances and lets the lag correlations
\(\gamma_0,\gamma_1,\ldots\) be free, with \(\gamma_0=\sigma^2\).
The Dunnett integrals are again~\eqref{eq:mvt}, with~\eqref{eq:ar-cov}
evaluated at \(\rho^{|i-k|}\) replaced by \(\gamma_{|i-k|}/\gamma_0\).
AR(1) is the geometric case. The reduction is the same size as in the
autoregression: two follow-up levels reduce to~\eqref{eq:ar-reduced} at the
single lag correlation of the contrasts, and three follow-up levels reduce
to~\eqref{eq:mvt-lambda} whenever \(\lambda_i^2\le 1\).

Ante-dependence of order \(1\), introduced by Gabriel~\cite{Gabriel1962},
keeps the Markov property and drops stationarity. With consecutive
correlations \(\rho_1,\ldots,\rho_p\) and equal variances,
\[
\Corr(Y_j,Y_k)
=
\prod_{t=\min(j,k)+1}^{\max(j,k)}\rho_t
\]
for \(j\neq k\). AR(1) is the case of a constant \(\rho_t\). Substituting
this correlation for \(\rho^{|j-k|}\) in the expansion that produced
\eqref{eq:ar-cov} gives the contrast correlation, and~\eqref{eq:6}
and~\eqref{eq:7} are~\eqref{eq:mvt} at that matrix. They reduce to
\eqref{eq:ar-reduced} or to~\eqref{eq:mvt-lambda} under the same conditions
on \(p\) and on the loadings. Heterogeneous AR(1) uses the covariance
\(\sigma_j\sigma_k\rho^{|j-k|}\). Equal variances return~\eqref{eq:ar-cov};
a correlation that does not depend on lag returns~\eqref{eq:csh-moments}
and the two-dimensional integrals~\eqref{eq:csh-one} and~\eqref{eq:csh-two}.

\subsubsection{Random slope and a factor-analytic residual}
\label{sec:cov-slope}

A random intercept and an independent random slope,
\[
Y_j=\mu_j+b_0+b_1 t_j+\varepsilon_j,
\]
with \(\Var(b_1)=\gamma\sigma^2\), \(\Var(\varepsilon_j)=\sigma^2\), and
times \(t_0=0\), \(t_i=i\), give baseline contrasts in which \(b_0\) cancels.
In standardized form
\begin{equation}
\label{eq:rs-z}
\frac{D_i}{\sigma}
=
i\sqrt{\gamma}\,U+V_i-V_0,
\end{equation}
where \(U,V_0,V_1,\ldots\) are independent standard normals. Therefore
\begin{equation}
\label{eq:rs-moments}
\Var(D_i)=\sigma^2(i^2\gamma+2),
\qquad
\Cov(D_i,D_k)=\sigma^2(ik\gamma+1).
\end{equation}
At \(\gamma=0\) the correlation is \(1/2\). The free residual in each contrast
is \(V_i\), so Proposition~\ref{prop:cond} conditions on \(\xi=(U,V_0)\).
Before that step, integrals~\eqref{eq:6} and~\eqref{eq:7} are~\eqref{eq:mvt}
at the correlation~\eqref{eq:rs-moments}. After it, the \(p\)-dimensional
probability is a product of univariate terms. If \(\gamma\) is known, a
residual mean square for \(\sigma^2\) supplies \(w=s/\sigma\) with density
\(f_\nu\), and the integrals reduce to
\begin{equation}
\label{eq:rs-one}
P(d')
=
\int_0^{\infty}\int_{-\infty}^{\infty}\int_{-\infty}^{\infty}
\prod_{i=1}^{p}
\Phi(d'w\,s_i-i\sqrt{\gamma}\,u+v) \phi(u) \phi(v)\,f_\nu(w)\,\d u\,\d v\,\d w
\end{equation}
and
\begin{equation}
\label{eq:rs-two}
\begin{aligned}
P(d'')
&=
\int_0^{\infty}\int_{-\infty}^{\infty}\int_{-\infty}^{\infty}
\prod_{i=1}^{p}
\Bigl[
\Phi(d''w\,s_i-i\sqrt{\gamma}\,u+v)
-
\Phi(-d''w\,s_i-i\sqrt{\gamma}\,u+v)
\Bigr]
\\
&\qquad\times
\phi(u)\,\phi(v)\,f_\nu(w)\,\d u\,\d v\,\d w,
\end{aligned}
\end{equation}
with \(s_i=\sqrt{i^2\gamma+2}\). The dimension is three for every \(p\): two
latent normals and the scale \(w\). As \(\nu\to\infty\), \(w\) concentrates
at \(1\) and the integral over \(f_\nu\) drops out. If \(\gamma\) is
estimated, \(\gamma\) and \(\nu\) are plugged in from the mixed-model fit and
\(f_\nu\) is replaced by the Satterthwaite or Kenward-Roger approximation.
For four or more follow-up times the product form~\eqref{eq:mvt-lambda} need
not hold, while~\eqref{eq:rs-one} and~\eqref{eq:rs-two} remain integrals of
dimension three.

A factor-analytic residual of order \(1\),
\begin{equation}
\label{eq:fa-rep}
Y_j=\mu_j+\lambda_j U+\sqrt{\psi_j}\,V_j,
\end{equation}
with \(U,V_j\) independent and standard normal, has the same two conditioning
variables once the control residual is included. The baseline contrast is
\[
D_i=(\lambda_i-\lambda_0)U+\sqrt{\psi_i}\,V_i-\sqrt{\psi_0}\,V_0,
\]
so
\[
\Var(D_i)=(\lambda_i-\lambda_0)^2+\psi_i+\psi_0,
\qquad
\Cov(D_i,D_k)=(\lambda_i-\lambda_0)(\lambda_k-\lambda_0)+\psi_0.
\]
Integrals~\eqref{eq:6} and~\eqref{eq:7} are~\eqref{eq:mvt} at this
correlation. Conditionally on \((U,V_0)=(u,v)\) the contrasts are
independent, with conditional mean
\(m_i(u,v)=(\lambda_i-\lambda_0)u-\sqrt{\psi_0}\,v\) and conditional standard
deviation \(\sqrt{\psi_i}\). Proposition~\ref{prop:cond} reduces both integrals
to dimension two plus the scale. Writing \(s_i=\sqrt{\Var(D_i)}\) and
\(w=s/\sigma\) when \(\lambda_j=\sigma\alpha_j\) and \(\psi_j=\sigma^2\beta_j\)
with \((\alpha,\beta)\) known,
\begin{equation}
\label{eq:fa-two}
\begin{aligned}
P(d'')
&=
\int_0^{\infty}\int_{-\infty}^{\infty}\int_{-\infty}^{\infty}
\prod_{i=1}^{p}
\Bigl[
\Phi\!\left(\frac{d''w\,s_i-m_i(u,v)}{\sqrt{\psi_i}}\right)
-
\Phi\!\left(\frac{-d''w\,s_i-m_i(u,v)}{\sqrt{\psi_i}}\right)
\Bigr]
\\
&\qquad\times
\phi(u)\,\phi(v)\,f_\nu(w)\,\d u\,\d v\,\d w.
\end{aligned}
\end{equation}
The one-sided integral~\eqref{eq:6} keeps the upper \(\Phi\). Heterogeneous
compound symmetry is the substitution \(\lambda_j=\sigma_j\sqrt{\rho}\) and
\(\psi_j=\sigma_j^2(1-\rho)\), which returns~\eqref{eq:csh-one}
and~\eqref{eq:csh-two}. The random-slope representation~\eqref{eq:rs-z} is
the substitution \(\lambda_i=i\sigma\sqrt{\gamma}\), \(\lambda_0=0\), and
\(\psi_j=\sigma^2\), which returns~\eqref{eq:rs-one} and~\eqref{eq:rs-two}.
Jennrich and Schluchter~\cite{JennrichSchluchter1986} include this residual
among the structured covariances that can be fitted by likelihood. 

\subsubsection{Kronecker products}
\label{sec:cov-kron}

Two repeated factors on a common grid, with covariance
\(\Sigma_A\otimes\Sigma_B\) for the vectorized cells, give marginal means
across \(B\) a covariance proportional to \(\Sigma_A\). The proportionality
constant is the average quadratic form \(c_B=b^{-2}1'\Sigma_B 1\), where \(b\) is
the number of levels of \(B\). It
cancels in the correlation, so a marginal Dunnett family on \(A\) has the
contrast correlation induced by \(\Sigma_A\) alone. Integrals~\eqref{eq:6}
and~\eqref{eq:7} therefore reduce to the Dunnett integrals of \(\Sigma_A\).
If \(\Sigma_A\) is spherical, that correlation is \(1/2\) and the integrals
are~\eqref{eq:mm-one} and~\eqref{eq:mm-two}, with the contrast standard
deviation multiplied by \(\sqrt{c_B}\). If \(\Sigma_A\) is AR(1),
they are~\eqref{eq:ar-reduced} or~\eqref{eq:mvt} according to the number of
follow-up levels, as in Section~\ref{sec:cov-ar}.
Proposition~\ref{prop:common} is the further case in which the whole grid is
averaged and the Dunnett family compares independent subjects: the integrals
reduce again to~\eqref{eq:mm-one} and~\eqref{eq:mm-two}.

Cellwise comparisons of every treatment with the control, at every level of
\(B\), start as~\eqref{eq:mvt} in dimension \(pq\). Let the compound-symmetry
factor on the treatments, including the control, have off-diagonal correlation
\(\rho\), and let \(\Sigma_E\) be the covariance along \(B\). The representation
\[
Y_i=\sqrt{\rho}\,W+\sqrt{1-\rho}\,Z_i,
\]
with \(W,Z_0,Z_1,\ldots\) independent and each distributed as \(N(0,\Sigma_E)\),
reproduces the Kronecker product of that compound-symmetry matrix with
\(\Sigma_E\). In the contrast \(Y_i-Y_0\) the common
vector \(W\) cancels, and \(\rho\) cancels from the correlation. What remains is
\[
D_i=\sqrt{1-\rho}\,(Z_i-Z_0).
\]
The correlation is \(1/2\) between two treatments at the same level of \(B\),
the correlation of \(\Sigma_E\) between two levels of the same treatment, and
half that correlation between different treatments at different levels.
Conditionally on the control slice
\(Z_0\), the vectors \(Z_i\) are independent, so the \(pq\)-dimensional
probability factors into a product of \(p\) copies of a \(q\)-dimensional
normal probability. The outer integral has dimension \(q\). When \(\Sigma_E\)
is spherical, each of those factors splits into univariate \(\Phi\) terms and
the integral reduces further to the block product~\eqref{eq:mm-blocks}.
When \(\Sigma_E\) is autoregressive, the inner factor remains the
\(q\)-dimensional cube probability of that autoregression.

\subsubsection{An unstructured covariance}
\label{sec:cov-un}

An unstructured covariance places no constraint on \(\Sigma\) beyond positive
definiteness. For baseline contrasts,
$\Var(D_i)=\Sigma_{ii}+\Sigma_{00}-2\Sigma_{i0}$, $\Cov(D_i,D_k)=\Sigma_{ik}-\Sigma_{i0}-\Sigma_{k0}+\Sigma_{00}$,
and \(R\) is this covariance scaled to unit diagonal. There is no loading
identity. Integrals~\eqref{eq:6} and~\eqref{eq:7} are exactly~\eqref{eq:mvt}
at that \(R\) and at the degrees of freedom of the scale estimator.
The inner probability does not factor through
Proposition~\ref{prop:cond} for a general \(R\), so the dimension stays \(p\). Nevertheless, it can still be evaluated in a direct manner \cite{GenzBretz2009}.

\subsubsection{Heteroscedastic independent groups}
\label{sec:cov-het}

Independent groups with variances \(\sigma_i^2\) and sizes \(n_i\) have
$\Var(D_i)={\sigma_i^2}/{n_i}+{\sigma_0^2}/{n_0}$ and $\Cov(D_i,D_k)={\sigma_0^2}/{n_0}$.
The correlation is the product form~\eqref{eq:R-lambda} with
\begin{equation}
\label{eq:het-lambda}
\lambda_i
=
\sqrt{\frac{\sigma_0^2/n_0}{\sigma_0^2/n_0+\sigma_i^2/n_i}}.
\end{equation}
The control mean is the only factor shared by the contrasts. When the
variance ratios are known, studentization by the known relative standard
errors leaves a central multivariate \(t\) on the residual degrees of
freedom. Integrals~\eqref{eq:6} and~\eqref{eq:7} are~\eqref{eq:mvt} at
\(R_{ik}=\lambda_i\lambda_k\), and the conditioning of
Section~\ref{sec:mvt} reduces them to~\eqref{eq:mvt-lambda} at the loadings
\eqref{eq:het-lambda}. Equal variances return Dunnett's sample-size
formula~\eqref{eq:rho-oneway}, and equal sample sizes return \(\lambda_i=1/\sqrt{2}\)
and the integrals~\eqref{eq:reduced-one} and~\eqref{eq:reduced}.

Separate variance estimates \(s_i^2\) in each group replace the single scale
\(s\). The statistics
\[
{D_i}/{\sqrt{s_i^2/n_i+s_0^2/n_0}}
\]
are then not a central multivariate \(t\),
and integrals~\eqref{eq:6} and~\eqref{eq:7} are no longer the coverage
probability. Dunnett~\cite{Dunnett1980} compared several approximate
procedures for that pairwise problem.

\subsubsection{Covariance structures and the 1955 tables}
\label{sec:cov-1955}

The roots of~\eqref{eq:7} at \(\rho=1/2\) were compared above with the
constants of Dunnett's Table~2a. The same comparison for the structures of
this section asks whether those constants remain conservative once the
correlation moves away from \(1/2\). Each structure is
extended to \(p\) follow-up levels or treatments as follows. AR(1) uses
\(\rho=0.5\) and \(\rho=0.9\) on levels \(0,\ldots,p\). Heterogeneous compound
symmetry uses \(\rho=0.5\) and standard deviations \(\sigma_0=1\),
\(\sigma_j=1+j/4\). The random slope uses \(\gamma=1\) at times \(0,\ldots,p\).
The Toeplitz structure has lag correlations \(1,0.6,0.3,0.1\) and zero beyond
lag three. Ante-dependence alternates consecutive correlations \(0.8\) and
\(0.4\). The heteroscedastic groups have equal sizes and variances
\(\sigma_0^2=1\), \(\sigma_i^2=1+i/2\). Roots for \(p\ge 4\) that are not
one-factor were computed by the Genz-Bretz algorithm~\cite{GenzBretz2009},
with an absolute error below \(0.002\) in \(d''\). The one-factor entries
were checked against~\eqref{eq:mvt-lambda}.

Table~\ref{tab:cov1955} lists the roots at \(\nu=\infty\) and \(\nu=5\), and
Figures~\ref{fig:cov1955a} and~\ref{fig:cov1955b} plot them against the
Table~2a constants and the \(\rho=1/2\) roots.

\begin{table}[ht]
\centering
\caption{Two-sided roots \(d''\) of integral~\eqref{eq:7} at \(P=0.95\) for
\(p\) comparisons under each covariance structure, with the \(\rho=1/2\) roots
of Table~\ref{tab:d95} and the constants of Dunnett's Table~2a.}
\label{tab:cov1955}
\footnotesize
\setlength{\tabcolsep}{4pt}
\begin{tabular}{lrrrrrrrrr}
\toprule
& \multicolumn{9}{c}{\(p\)} \\
\cmidrule(lr){2-10}
Structure & 1 & 2 & 3 & 4 & 5 & 6 & 7 & 8 & 9 \\
\midrule
\multicolumn{10}{l}{\(\nu=\infty\)} \\
Table~2a (1955)            & 1.96  & 2.21  & 2.37  & 2.47  & 2.55  & 2.62  & 2.67  & 2.71  & 2.75 \\
\(\rho=1/2\)               & 1.960 & 2.212 & 2.349 & 2.442 & 2.511 & 2.567 & 2.613 & 2.652 & 2.686 \\
AR(1), \(\rho=0.5\)        & 1.960 & 2.197 & 2.325 & 2.413 & 2.479 & 2.533 & 2.579 & 2.617 & 2.651 \\
AR(1), \(\rho=0.9\)        & 1.960 & 2.182 & 2.298 & 2.373 & 2.429 & 2.473 & 2.510 & 2.540 & 2.567 \\
CSH, \(\sigma_j=1+j/4\)    & 1.960 & 2.224 & 2.370 & 2.470 & 2.546 & 2.607 & 2.657 & 2.699 & 2.736 \\
Random slope, \(\gamma=1\) & 1.960 & 2.178 & 2.269 & 2.314 & 2.339 & 2.353 & 2.362 & 2.368 & 2.372 \\
Toeplitz \(1,0.6,0.3,0.1\) & 1.960 & 2.188 & 2.310 & 2.395 & 2.461 & 2.515 & 2.560 & 2.599 & 2.634 \\
Ante-dependence \(0.8,0.4\)& 1.960 & 2.223 & 2.315 & 2.415 & 2.463 & 2.525 & 2.558 & 2.603 & 2.629 \\
Variances \(1+i/2\)        & 1.960 & 2.224 & 2.371 & 2.473 & 2.551 & 2.613 & 2.665 & 2.710 & 2.749 \\
\midrule
\multicolumn{10}{l}{\(\nu=5\)} \\
Table~2a (1955)            & 2.57  & 3.03  & 3.39  & 3.66  & 3.88  & 4.06  & 4.22  & 4.36  & 4.49 \\
\(\rho=1/2\)               & 2.571 & 3.030 & 3.293 & 3.476 & 3.615 & 3.727 & 3.821 & 3.900 & 3.970 \\
AR(1), \(\rho=0.5\)        & 2.571 & 2.995 & 3.237 & 3.407 & 3.539 & 3.646 & 3.737 & 3.814 & 3.884 \\
AR(1), \(\rho=0.9\)        & 2.571 & 2.964 & 3.177 & 3.320 & 3.428 & 3.513 & 3.583 & 3.644 & 3.697 \\
CSH, \(\sigma_j=1+j/4\)    & 2.571 & 3.059 & 3.348 & 3.553 & 3.712 & 3.841 & 3.948 & 4.039 & 4.119 \\
Random slope, \(\gamma=1\) & 2.571 & 2.956 & 3.118 & 3.196 & 3.240 & 3.265 & 3.281 & 3.291 & 3.299 \\
Toeplitz \(1,0.6,0.3,0.1\) & 2.571 & 2.976 & 3.205 & 3.366 & 3.497 & 3.605 & 3.696 & 3.775 & 3.845 \\
Ante-dependence \(0.8,0.4\)& 2.571 & 3.057 & 3.224 & 3.422 & 3.512 & 3.638 & 3.701 & 3.794 & 3.843 \\
Variances \(1+i/2\)        & 2.571 & 3.060 & 3.352 & 3.562 & 3.726 & 3.862 & 3.977 & 4.077 & 4.166 \\
\bottomrule
\end{tabular}
\end{table}

\begin{figure}[ht]
\centering
\begin{tikzpicture}
\begin{axis}[
  width=0.48\textwidth, height=6.2cm,
  title={\(\nu=\infty\)},
  xlabel={\(p\)}, ylabel={\(d''\)},
  xmin=1, xmax=9, ymin=1.9, ymax=2.8, xtick={1,...,9},
  legend pos=north west, legend style={font=\scriptsize, cells={anchor=west}},
  grid=major, grid style={dotted},
  tick label style={font=\footnotesize}, label style={font=\small},
]
\addplot[thick, red, dashed, mark=square*, mark size=1.5pt] coordinates
  {(1,1.96) (2,2.21) (3,2.37) (4,2.47) (5,2.55) (6,2.62) (7,2.67) (8,2.71) (9,2.75)};
\addlegendentry{Table~2a}
\addplot[thick, blue, mark=*, mark size=1.5pt] coordinates
  {(1,1.960) (2,2.212) (3,2.349) (4,2.442) (5,2.511) (6,2.567) (7,2.613) (8,2.652) (9,2.686)};
\addlegendentry{\(\rho=1/2\)}
\addplot[thick, orange, dotted, mark=triangle*, mark size=1.8pt] coordinates
  {(1,1.960) (2,2.197) (3,2.325) (4,2.413) (5,2.479) (6,2.533) (7,2.579) (8,2.617) (9,2.651)};
\addlegendentry{AR(1), \(\rho=0.5\)}
\addplot[thick, green!50!black, dashdotted, mark=triangle, mark size=1.8pt] coordinates
  {(1,1.960) (2,2.182) (3,2.298) (4,2.373) (5,2.429) (6,2.473) (7,2.510) (8,2.540) (9,2.567)};
\addlegendentry{AR(1), \(\rho=0.9\)}
\addplot[thick, purple, densely dotted, mark=diamond*, mark size=1.8pt] coordinates
  {(1,1.960) (2,2.224) (3,2.370) (4,2.470) (5,2.546) (6,2.607) (7,2.657) (8,2.699) (9,2.736)};
\addlegendentry{CSH}
\addplot[thick, brown, loosely dashed, mark=o, mark size=1.5pt] coordinates
  {(1,1.960) (2,2.178) (3,2.269) (4,2.314) (5,2.339) (6,2.353) (7,2.362) (8,2.368) (9,2.372)};
\addlegendentry{random slope}
\end{axis}
\end{tikzpicture}
\hfill
\begin{tikzpicture}
\begin{axis}[
  width=0.48\textwidth, height=6.2cm,
  title={\(\nu=5\)},
  xlabel={\(p\)}, ylabel={\(d''\)},
  xmin=1, xmax=9, ymin=2.4, ymax=4.7, xtick={1,...,9},
  legend pos=north west, legend style={font=\scriptsize, cells={anchor=west}},
  grid=major, grid style={dotted},
  tick label style={font=\footnotesize}, label style={font=\small},
]
\addplot[thick, red, dashed, mark=square*, mark size=1.5pt] coordinates
  {(1,2.57) (2,3.03) (3,3.39) (4,3.66) (5,3.88) (6,4.06) (7,4.22) (8,4.36) (9,4.49)};
\addlegendentry{Table~2a}
\addplot[thick, blue, mark=*, mark size=1.5pt] coordinates
  {(1,2.571) (2,3.030) (3,3.293) (4,3.476) (5,3.615) (6,3.727) (7,3.821) (8,3.900) (9,3.970)};
\addlegendentry{\(\rho=1/2\)}
\addplot[thick, orange, dotted, mark=triangle*, mark size=1.8pt] coordinates
  {(1,2.571) (2,2.995) (3,3.237) (4,3.407) (5,3.539) (6,3.646) (7,3.737) (8,3.814) (9,3.884)};
\addlegendentry{AR(1), \(\rho=0.5\)}
\addplot[thick, green!50!black, dashdotted, mark=triangle, mark size=1.8pt] coordinates
  {(1,2.571) (2,2.964) (3,3.177) (4,3.320) (5,3.428) (6,3.513) (7,3.583) (8,3.644) (9,3.697)};
\addlegendentry{AR(1), \(\rho=0.9\)}
\addplot[thick, purple, densely dotted, mark=diamond*, mark size=1.8pt] coordinates
  {(1,2.571) (2,3.059) (3,3.348) (4,3.553) (5,3.712) (6,3.841) (7,3.948) (8,4.039) (9,4.119)};
\addlegendentry{CSH}
\addplot[thick, brown, loosely dashed, mark=o, mark size=1.5pt] coordinates
  {(1,2.571) (2,2.956) (3,3.118) (4,3.196) (5,3.240) (6,3.265) (7,3.281) (8,3.291) (9,3.299)};
\addlegendentry{random slope}
\end{axis}
\end{tikzpicture}
\caption{Roots of integral~\eqref{eq:7} at \(P=0.95\) for baseline comparisons
under AR(1), heterogeneous compound symmetry, and a random slope, with
the \(\rho=1/2\) roots and the constants of Dunnett's Table~2a. Left,
\(\nu=\infty\); right, \(\nu=5\). }
\label{fig:cov1955a}
\end{figure}

\begin{figure}[ht]
\centering
\begin{tikzpicture}
\begin{axis}[
  width=0.48\textwidth, height=6.2cm,
  title={\(\nu=\infty\)},
  xlabel={\(p\)}, ylabel={\(d''\)},
  xmin=1, xmax=9, ymin=1.9, ymax=2.8, xtick={1,...,9},
  legend pos=north west, legend style={font=\scriptsize, cells={anchor=west}},
  grid=major, grid style={dotted},
  tick label style={font=\footnotesize}, label style={font=\small},
]
\addplot[thick, red, dashed, mark=square*, mark size=1.5pt] coordinates
  {(1,1.96) (2,2.21) (3,2.37) (4,2.47) (5,2.55) (6,2.62) (7,2.67) (8,2.71) (9,2.75)};
\addlegendentry{Table~2a}
\addplot[thick, blue, mark=*, mark size=1.5pt] coordinates
  {(1,1.960) (2,2.212) (3,2.349) (4,2.442) (5,2.511) (6,2.567) (7,2.613) (8,2.652) (9,2.686)};
\addlegendentry{\(\rho=1/2\)}
\addplot[thick, orange, dotted, mark=triangle*, mark size=1.8pt] coordinates
  {(1,1.960) (2,2.188) (3,2.310) (4,2.395) (5,2.461) (6,2.515) (7,2.560) (8,2.599) (9,2.634)};
\addlegendentry{Toeplitz}
\addplot[thick, green!50!black, dashdotted, mark=triangle, mark size=1.8pt] coordinates
  {(1,1.960) (2,2.223) (3,2.315) (4,2.415) (5,2.463) (6,2.525) (7,2.558) (8,2.603) (9,2.629)};
\addlegendentry{ante-dependence}
\addplot[thick, purple, densely dotted, mark=diamond*, mark size=1.8pt] coordinates
  {(1,1.960) (2,2.224) (3,2.371) (4,2.473) (5,2.551) (6,2.613) (7,2.665) (8,2.710) (9,2.749)};
\addlegendentry{variances \(1+i/2\)}
\end{axis}
\end{tikzpicture}
\hfill
\begin{tikzpicture}
\begin{axis}[
  width=0.48\textwidth, height=6.2cm,
  title={\(\nu=5\)},
  xlabel={\(p\)}, ylabel={\(d''\)},
  xmin=1, xmax=9, ymin=2.4, ymax=4.7, xtick={1,...,9},
  legend pos=north west, legend style={font=\scriptsize, cells={anchor=west}},
  grid=major, grid style={dotted},
  tick label style={font=\footnotesize}, label style={font=\small},
]
\addplot[thick, red, dashed, mark=square*, mark size=1.5pt] coordinates
  {(1,2.57) (2,3.03) (3,3.39) (4,3.66) (5,3.88) (6,4.06) (7,4.22) (8,4.36) (9,4.49)};
\addlegendentry{Table~2a}
\addplot[thick, blue, mark=*, mark size=1.5pt] coordinates
  {(1,2.571) (2,3.030) (3,3.293) (4,3.476) (5,3.615) (6,3.727) (7,3.821) (8,3.900) (9,3.970)};
\addlegendentry{\(\rho=1/2\)}
\addplot[thick, orange, dotted, mark=triangle*, mark size=1.8pt] coordinates
  {(1,2.571) (2,2.976) (3,3.205) (4,3.366) (5,3.497) (6,3.605) (7,3.696) (8,3.775) (9,3.845)};
\addlegendentry{Toeplitz}
\addplot[thick, green!50!black, dashdotted, mark=triangle, mark size=1.8pt] coordinates
  {(1,2.571) (2,3.057) (3,3.224) (4,3.422) (5,3.512) (6,3.638) (7,3.701) (8,3.794) (9,3.843)};
\addlegendentry{ante-dependence}
\addplot[thick, purple, densely dotted, mark=diamond*, mark size=1.8pt] coordinates
  {(1,2.571) (2,3.060) (3,3.352) (4,3.562) (5,3.726) (6,3.862) (7,3.977) (8,4.077) (9,4.166)};
\addlegendentry{variances \(1+i/2\)}
\end{axis}
\end{tikzpicture}
\caption{Roots of integral~\eqref{eq:7} at \(P=0.95\) under the banded
Toeplitz structure, alternating ante-dependence, and heteroscedastic
independent groups, with the \(\rho=1/2\) roots and Dunnett's Table~2a. Left,
\(\nu=\infty\); right, \(\nu=5\). }
\label{fig:cov1955b}
\end{figure}

Structures whose contrast correlation exceeds \(1/2\), namely AR(1), the
Toeplitz band, alternating ante-dependence, and the random slope, have roots
below the \(\rho=1/2\) curve, so Table~2a is conservative for them. The
random slope is the largest gap. Heterogeneous compound symmetry and
heteroscedastic groups push the correlation below \(1/2\), and their roots
lie above the \(\rho=1/2\) curve. At \(p=2\), where the bound~\eqref{eq:11}
is an equality, Table~2a lies below those roots. From \(p=3\) the slack in
the bound overtakes the correlation effect. The \(\rho=1/2\) root stays
anticonservative for those two structures at every \(p\).

\subsubsection{Covariance structures that move the constant}
\label{sec:tukey-cov}

Once the pairwise variances differ, or a disjoint pair acquires a nonzero
correlation, \eqref{eq:pearson-hartley} ceases to be the probability.
Each contrast is then divided by its own standard deviation, and a single
critical constant \(d_T\) is applied to every studentized pair. That common
constant is the range convention when all standard deviations are equal, and
it is kept when they are not, as in Section~\ref{sec:integral}. The
probability is the multivariate-\(t\) cube at the resulting correlation,
mixed over \(f_\nu\) when a single chi-square scale is available. The cube
has dimension \(m=k(k-1)/2\) and rank \(k-1\).

The structures are those of Section~\ref{sec:cov-1955}, applied to all
\(k\) levels rather than to baseline contrasts against level \(0\).

Table~\ref{tab:tukey} gives \(d_T\) at \(P=0.95\). The first row is the ordinary and spherical root of~\eqref{eq:tukey-student}. The other rows are the pairwise cube under the covariances of Section~\ref{sec:cov-1955}.

\begin{table}[ht]
\centering
\caption{Critical values \(d_T=q/\sqrt{2}\) for Tukey's studentized range at
\(P=0.95\).}
\label{tab:tukey}
\footnotesize
\setlength{\tabcolsep}{4pt}
\begin{tabular}{lrrrrrr}
\toprule
& \multicolumn{6}{c}{\(k\)} \\
\cmidrule(lr){2-7}
Structure & 3 & 4 & 5 & 6 & 7 & 8 \\
\midrule
\multicolumn{7}{l}{\(\nu=\infty\)} \\
Studentized range          & 2.344 & 2.569 & 2.728 & 2.850 & 2.948 & 3.031 \\
AR(1), \(\rho=0.5\)        & 2.336 & 2.555 & 2.708 & 2.826 & 2.922 & 3.001 \\
AR(1), \(\rho=0.9\)        & 2.322 & 2.528 & 2.670 & 2.778 & 2.863 & 2.935 \\
CSH, \(\sigma_j=1+(j-1)/4\)& 2.340 & 2.561 & 2.715 & 2.833 & 2.926 & 3.004 \\
Random slope, \(\gamma=1\) & 2.317 & 2.500 & 2.613 & 2.691 & 2.750 & 2.797 \\
Toeplitz \(1,0.6,0.3,0.1\) & 2.328 & 2.540 & 2.689 & 2.803 & 2.896 & 2.975 \\
Ante-dependence \(0.8,0.4\)& 2.318 & 2.517 & 2.677 & 2.781 & 2.884 & 2.953 \\
Variances \(1+(i-1)/2\)    & 2.341 & 2.563 & 2.719 & 2.838 & 2.933 & 3.013 \\
\midrule
\multicolumn{7}{l}{\(\nu=5\)} \\
Studentized range          & 3.254 & 3.690 & 4.012 & 4.266 & 4.476 & 4.654 \\
AR(1), \(\rho=0.5\)        & 3.242 & 3.664 & 3.973 & 4.217 & 4.419 & 4.589 \\
AR(1), \(\rho=0.9\)        & 3.216 & 3.614 & 3.900 & 4.122 & 4.305 & 4.456 \\
CSH, \(\sigma_j=1+(j-1)/4\)& 3.248 & 3.676 & 3.989 & 4.234 & 4.434 & 4.605 \\
Random slope, \(\gamma=1\) & 3.208 & 3.559 & 3.783 & 3.941 & 4.063 & 4.163 \\
Toeplitz \(1,0.6,0.3,0.1\) & 3.227 & 3.637 & 3.935 & 4.169 & 4.365 & 4.533 \\
Ante-dependence \(0.8,0.4\)& 3.214 & 3.601 & 3.919 & 4.136 & 4.350 & 4.500 \\
Variances \(1+(i-1)/2\)    & 3.249 & 3.679 & 3.995 & 4.244 & 4.447 & 4.621 \\
\bottomrule
\end{tabular}
\end{table}

Every root in Table~\ref{tab:tukey} lies below the studentized range root,
so the ordinary Tukey constant is conservative for these covariances. The
gap is largest for the random slope. Figure~\ref{fig:tukey} plots the
ordinary root against the four structures that account for most of the
spread.

\begin{figure}[ht]
\centering
\begin{tikzpicture}
\begin{axis}[
  width=0.48\textwidth, height=6.2cm,
  title={\(\nu=\infty\)},
  xlabel={\(k\)}, ylabel={\(d_T\)},
  xmin=3, xmax=8, ymin=2.25, ymax=3.15, xtick={3,...,8},
  legend pos=north west, legend style={font=\scriptsize, cells={anchor=west}},
  grid=major, grid style={dotted},
  tick label style={font=\footnotesize}, label style={font=\small},
]
\addplot[thick, blue, mark=*, mark size=1.5pt] coordinates
  {(3,2.344) (4,2.569) (5,2.728) (6,2.850) (7,2.948) (8,3.031)};
\addlegendentry{studentized range}
\addplot[thick, orange, dotted, mark=triangle*, mark size=1.8pt] coordinates
  {(3,2.336) (4,2.555) (5,2.708) (6,2.826) (7,2.922) (8,3.001)};
\addlegendentry{AR(1), \(\rho=0.5\)}
\addplot[thick, green!50!black, dashdotted, mark=triangle, mark size=1.8pt] coordinates
  {(3,2.322) (4,2.528) (5,2.670) (6,2.778) (7,2.863) (8,2.935)};
\addlegendentry{AR(1), \(\rho=0.9\)}
\addplot[thick, purple, densely dotted, mark=diamond*, mark size=1.8pt] coordinates
  {(3,2.340) (4,2.561) (5,2.715) (6,2.833) (7,2.926) (8,3.004)};
\addlegendentry{CSH}
\addplot[thick, brown, loosely dashed, mark=o, mark size=1.5pt] coordinates
  {(3,2.317) (4,2.500) (5,2.613) (6,2.691) (7,2.750) (8,2.797)};
\addlegendentry{random slope}
\end{axis}
\end{tikzpicture}
\hfill
\begin{tikzpicture}
\begin{axis}[
  width=0.48\textwidth, height=6.2cm,
  title={\(\nu=5\)},
  xlabel={\(k\)}, ylabel={\(d_T\)},
  xmin=3, xmax=8, ymin=3.1, ymax=4.85, xtick={3,...,8},
  legend pos=north west, legend style={font=\scriptsize, cells={anchor=west}},
  grid=major, grid style={dotted},
  tick label style={font=\footnotesize}, label style={font=\small},
]
\addplot[thick, blue, mark=*, mark size=1.5pt] coordinates
  {(3,3.254) (4,3.690) (5,4.012) (6,4.266) (7,4.476) (8,4.654)};
\addlegendentry{studentized range}
\addplot[thick, orange, dotted, mark=triangle*, mark size=1.8pt] coordinates
  {(3,3.242) (4,3.664) (5,3.973) (6,4.217) (7,4.419) (8,4.589)};
\addlegendentry{AR(1), \(\rho=0.5\)}
\addplot[thick, green!50!black, dashdotted, mark=triangle, mark size=1.8pt] coordinates
  {(3,3.216) (4,3.614) (5,3.900) (6,4.122) (7,4.305) (8,4.456)};
\addlegendentry{AR(1), \(\rho=0.9\)}
\addplot[thick, purple, densely dotted, mark=diamond*, mark size=1.8pt] coordinates
  {(3,3.248) (4,3.676) (5,3.989) (6,4.234) (7,4.434) (8,4.605)};
\addlegendentry{CSH}
\addplot[thick, brown, loosely dashed, mark=o, mark size=1.5pt] coordinates
  {(3,3.208) (4,3.559) (5,3.783) (6,3.941) (7,4.063) (8,4.163)};
\addlegendentry{random slope}
\end{axis}
\end{tikzpicture}
\caption{Tukey critical values \(d_T\) against the number of treatments. 
 Left, \(\nu=\infty\); right, \(\nu=5\). }
\label{fig:tukey}
\end{figure}

\section{$N$-way linear mixed model}
\label{sec:nway}

An \(N\)-way layout has \(N\) fixed factors. Of these, \(M\) are within-subjects
factors: every subject is observed at every combination of their levels.
Subjects are nested in the cells of the remaining between-subjects factors
and crossed with the \(M\) within-subjects factors. The case \(M=1\) is
repeated measures on one factor, the case \(M=2\) is repeated measures on
two factors, and the formulae below are the same for a general \(M\).

Let \(C\) be the number of between-subject cells and \(n\) be  the number
of subjects in each cell. Denote the numbers of levels
of the within-subjects factors as \(b_1,\ldots,b_M\). The Dunnett factor is within-factor \(1\),
with control level \(0\) and \(q=b_1-1\) comparisons. A slice is one
between-subject cell together with one combination of levels of
within-factors \(2,\ldots,M\). There are
\begin{equation}
\label{eq:K-slices}
K
=
C\prod_{r=2}^{M} b_r
\end{equation}
slices. \(K=C\) if \(M=1\).

\subsection{Marginal comparisons}
\label{sec:nway-marg}

A marginal comparison on a between-subjects factor averages the other
factors with equal weights. Subjects in different between-cells are
independent, so the marginal means of two levels are independent whenever
those levels use different subjects. The Dunnett correlation is then \(1/2\)
for every subject variance, by Proposition~\ref{prop:common}. The denominator
is the between-subjects mean square, on \(\nu=C(n-1)\) degrees of freedom
when one between-subjects factor accounts for all \(C\) cells and the design
is balanced. Further within-subjects levels change the standard error. They
do not add denominator degrees of freedom.

A marginal comparison on a within-subjects factor averages the other factors
first. Every random effect that does not involve the compared factor is then
common to all of its levels and cancels. Under compound symmetry within
subjects the correlation returns to \(1/2\), and the denominator is the
interaction of that factor with subjects. Integrals~\eqref{eq:6}
and~\eqref{eq:7} are~\eqref{eq:mm-one} and~\eqref{eq:mm-two}.

\subsection{The block integral}
\label{sec:mm-twoway}

Suppose every random effect that is constant across the Dunnett factor
cancels in the contrast, and no subject-by-Dunnett-factor variance component
remains to tie one slice to another. Each slice is then an equicorrelated
Dunnett family of size \(q\) at correlation \(1/2\), the slices are
independent, and they share one error mean square \(s^2\). That mean square
is the interaction of all \(M\) within-subjects factors with subjects, within
between-cells. In the balanced Gaussian layout it is independent of the
contrasts and
\[
\nu
=
C(n-1)\prod_{r=1}^{M}(b_r-1),
\qquad
w=s/\sigma,
\]
with \(\sigma^2\) the residual variance and \(f_\nu\) the density
\eqref{eq:wdensity}. Conditional on \(w\), independence of the slices
multiplies \(K\) copies of the inner integral of~\eqref{eq:mm-two}:
\begin{equation}
\label{eq:mm-blocks}
P(d'')
=
\int_0^{\infty}
\left(
\int_{-\infty}^{\infty}
\bigl[
\Phi(v+d''w\sqrt{2})-\Phi(v-d''w\sqrt{2})
\bigr]^{q}
\phi(v)\,\d v
\right)^{K}
f_\nu(w)\,\d w.
\end{equation}
The one-sided integral~\eqref{eq:6} replaces the difference of \(\Phi\) by
\(\Phi(v+d'w\sqrt{2})\). The power \(K\) stands outside the integral over the
control residual, because the slices have different control residuals, and
inside the integral over \(w\), because the slices share \(s\).

The studentized-range family on the same slices replaces the inner integral
by the Pearson-Hartley integral at threshold \(d_T\sqrt{2}\):
\begin{equation}
\label{eq:tukey-blocks}
\Prob(Q\le d_T\sqrt{2})
=
\int_0^{\infty}
\left(
k\int_{-\infty}^{\infty}
\phi(x)\,
\bigl[\Phi(x+d_T w\sqrt{2})-\Phi(x)\bigr]^{k-1}
\,\d x
\right)^{K}
f_\nu(w)\,\d w.
\end{equation}
Here \(k=b_1\), and \(K\) and \(\nu\) are those of~\eqref{eq:mm-blocks}. The
power \(K\) is placed in the same way.

If a subject interaction with the Dunnett factor is present, slices that
share the same subjects are tied by that interaction. The \(C\)
between-subject cells stay independent, so the integral is a power \(C\) of
one cell's probability, inside the integral over the shared \(w\). Inside a
cell the probability is the conditioning integral of
Proposition~\ref{prop:cond}. For \(M=2\) that integral is~\eqref{eq:mm-cross}.
Setting the tying variances to zero returns~\eqref{eq:mm-blocks} and
\eqref{eq:tukey-blocks}.

Table~\ref{tab:blocks} solves~\eqref{eq:mm-blocks} at \(P=0.95\) and
\(\nu=\infty\), with \(K=a\) independent groups and \(q\) comparisons at
correlation \(1/2\) in each group. The root \(d''\) is the critical value for
that family: independence of the groups makes the coverage of all \(a\)
groups together the \(a\)-th power of the coverage inside one group, and
\(d''\) is chosen so that this power equals \(0.95\).

The value \(d_q\) is the same root with \(a=1\). It covers one group with
probability \(0.95\). Applied unchanged to all \(a\) groups, the group
coverages multiply, and the simultaneous family is covered with probability
\(0.95^{a}\).

The value \(d_{aq}\) is the root for a single equicorrelated family of
\(aq\) contrasts, every correlation equal to \(1/2\). In the block-diagonal
family the correlation is \(1/2\) inside a group and zero between groups, so
\(d_{aq}\) lies below \(d''\). The last column is the coverage of this
smaller constant on the block-diagonal family. It runs from \(0.940\) to
\(0.946\). In the first row, \(a=2\) and \(q=2\), the three constants are
\(d_q=2.212\), \(d_{aq}=2.442\) and \(d''=2.473\): the one-group constant
covers \(0.95^{2}=0.9025\) of the family, and the equicorrelated constant
covers \(0.946\).

\begin{table}[ht]
\centering
\caption{Two-sided \(d''\) at \(P=0.95\) and \(\nu=\infty\) for simultaneous
simple effects 
in \(a\) independent groups.}
\label{tab:blocks}
\begin{tabular}{rrrrrr}
\toprule
\(a\) & \(q\) & \(d''\) & \(d_q\) & \(d_{aq}\) & coverage of \(d_{aq}\) \\
\midrule
2 & 2 & 2.473 & 2.212 & 2.442 & 0.946 \\
2 & 3 & 2.602 & 2.349 & 2.567 & 0.945 \\
2 & 5 & 2.757 & 2.511 & 2.716 & 0.944 \\
3 & 3 & 2.741 & 2.349 & 2.686 & 0.941 \\
4 & 2 & 2.714 & 2.212 & 2.652 & 0.940 \\
\bottomrule
\end{tabular}
\end{table}

Table~\ref{tab:blocks-check} checks the block integrals~\eqref{eq:mm-blocks}
and~\eqref{eq:tukey-blocks} against the unreduced cube probability. The
roots were obtained from the double integrals by adaptive quadrature, the
inner integral on the real line and the outer integral on the chi-square
probability scale of \(w\). The cube probability was then computed at each
root by the Genz-Bretz algorithm~\cite{GenzBretz2009} in \texttt{mvtnorm} package version 1.3-3 (R version 4.5.1). For the Dunnett family the covariance is block diagonal
with \(K\) blocks of size \(q\) and off-diagonal entries \(1/2\), a matrix of
dimension \(Kq\). For the range family it is block diagonal with \(K\) blocks
of the \(k(k-1)/2\) pairwise contrasts of \(k\) independent means, a singular
matrix of rank \(K(k-1)\), and the bounds are \(\pm d_T\) times the standard
deviation of each contrast. The absolute error tolerance was \(10^{-6}\).
Every cube probability lies within \(1.6\times 10^{-5}\) of \(0.95\), and
the Dunnett rows lie within \(5\times 10^{-6}\). The largest deviation is the
\(20\)-dimensional range family \(K=2\), \(k=5\) at \(\nu=20\). The same
roots were recomputed with \texttt{NIntegrate} in Wolfram Mathematica 14.3, and the two
quadratures agree to \(10^{-6}\). At \(K=1\) the range block
integral~\eqref{eq:tukey-blocks} reproduces the studentized range
distribution function at the \(0.95\) point to seven decimals for
\(k=3,4,5\) and \(\nu=5,20,\infty\). The rows \(\nu=\infty\) of the Dunnett
part are the column \(d''\) of Table~\ref{tab:blocks}. For \(M=1\) the
table reads \(K=a\) groups; for \(M=2\) it reads
\(K=b\) levels of the second within-subjects factor, on
\(\nu=p(b-1)(n-1)\).

\begin{table}[ht]
\centering
\caption{Roots of the block integrals at \(P=0.95\) and the probability of
the block-diagonal cube at each root, computed by \texttt{mvtnorm}. Upper
part: Dunnett integral~\eqref{eq:mm-blocks}, \(K\) slices of \(q\)
comparisons. Lower part: studentized range integral~\eqref{eq:tukey-blocks},
\(K\) slices of \(k\) levels, root on the scale \(d_T=q/\sqrt2\).}
\label{tab:blocks-check}
\begin{tabular}{rrrrrrrr}
\toprule
& & \multicolumn{2}{c}{\(\nu=\infty\)} & \multicolumn{2}{c}{\(\nu=20\)} & \multicolumn{2}{c}{\(\nu=5\)} \\
\(K\) & \(q\) & \(d''\) & cube & \(d''\) & cube & \(d''\) & cube \\
\midrule
2 & 2 & 2.4728 & 0.95000 & 2.6947 & 0.95000 & 3.5619 & 0.95000 \\
2 & 3 & 2.6022 & 0.95000 & 2.8526 & 0.95000 & 3.8288 & 0.95000 \\
2 & 5 & 2.7569 & 0.95000 & 3.0435 & 0.95000 & 4.1532 & 0.95000 \\
3 & 3 & 2.7414 & 0.95000 & 3.0296 & 0.95000 & 4.1406 & 0.95000 \\
4 & 2 & 2.7137 & 0.95000 & 2.9979 & 0.95000 & 4.0934 & 0.95000 \\
\midrule
\(K\) & \(k\) & \(d_T\) & cube & \(d_T\) & cube & \(d_T\) & cube \\
\midrule
2 & 3 & 2.5992 & 0.95000 & 2.8456 & 0.95000 & 3.7999 & 0.95000 \\
2 & 4 & 2.8128 & 0.95000 & 3.1098 & 0.94999 & 4.2451 & 0.95000 \\
3 & 3 & 2.7392 & 0.95000 & 3.0239 & 0.95000 & 4.1173 & 0.95000 \\
2 & 5 & 2.9635 & 0.95001 & 3.3000 & 0.94998 & 4.5697 & 0.94999 \\
4 & 3 & 2.8347 & 0.95000 & 3.1479 & 0.95000 & 4.3399 & 0.95000 \\
\bottomrule
\end{tabular}
\end{table}


Table~\ref{tab:blocks-time} times one evaluation at the root for \(P=0.95\)
and \(\nu=20\). The block integral stays near a tenth of a second. The cube's
time and reported error grow with the dimension, and above dimension \(10\)
the run stops at its point limit.

\begin{table}[ht]
\centering
\caption{One evaluation of the block integral and of the unreduced cube at
the root for \(P=0.95\) and \(\nu=20\). Upper part: Dunnett
integral~\eqref{eq:mm-blocks}, dimension \(Kq\). Lower part: range
integral~\eqref{eq:tukey-blocks}, \(Kk(k-1)/2\) contrasts of rank
\(K(k-1)\). The column prob.\ is the Genz-Bretz probability at the root
and err its reported absolute error. The columns block and cube are the
times in seconds for the block integral and the unreduced cube, and the
factor is the ratio of those times.}
\label{tab:blocks-time}
\begin{tabular}{rrrrrrrrr}
\toprule
& & & & \multicolumn{2}{c}{unreduced cube} & \multicolumn{2}{c}{time (s)} & \\
\cmidrule(lr){5-6} \cmidrule(lr){7-8}
\(K\) & \(q\) & dim & \(d''\) & prob. & err & block & cube & factor \\
\midrule
2 & 2 & 4 & 2.6947 & 0.950001 & \(7\times10^{-7}\) & 0.11 & 0.78 & 7 \\
2 & 5 & 10 & 3.0435 & 0.949996 & \(1\times10^{-5}\) & 0.12 & 1.33 & 11 \\
4 & 5 & 20 & 3.3406 & 0.950005 & \(2\times10^{-5}\) & 0.11 & 2.35 & 21 \\
6 & 6 & 36 & 3.5740 & 0.949984 & \(6\times10^{-5}\) & 0.09 & 4.00 & 44 \\
8 & 8 & 64 & 3.7906 & 0.949967 & \(8\times10^{-5}\) & 0.09 & 6.89 & 77 \\
10 & 10 & 100 & 3.9551 & 0.950052 & \(1\times10^{-4}\) & 0.08 & 10.95 & 137 \\
\midrule
\(K\) & \(k\) & dim & \(d_T\) & prob. & err & block & cube & factor \\
\midrule
2 & 3 & 6 & 2.8456 & 0.950001 & \(2\times10^{-6}\) & 0.09 & 0.78 & 9 \\
2 & 5 & 20 & 3.3000 & 0.949984 & \(5\times10^{-5}\) & 0.09 & 1.16 & 13 \\
4 & 5 & 40 & 3.5947 & 0.950014 & \(4\times10^{-5}\) & 0.08 & 2.09 & 26 \\
3 & 8 & 84 & 3.8431 & 0.949917 & \(2\times10^{-4}\) & 0.08 & 3.18 & 40 \\
6 & 8 & 168 & 4.1241 & 0.949988 & \(2\times10^{-4}\) & 0.08 & 7.19 & 90 \\
\bottomrule
\end{tabular}
\end{table}

\section{Conclusions}
\label{sec:apply}

Dunnett~\cite{Dunnett1964} treated comparisons with a control in the balanced
model with compound symmetry: the correlation is \(1/2\) for every
intra-class correlation, the denominator is the treatment-by-subject
interaction, and at a fixed \(\nu\) the critical constant is the ordinary
analysis-of-variance constant. This paper determines which further designs
and covariances remain on that integral, and the direction in which the
constant moves when the contrast correlation leaves \(1/2\). The same
determination is made for Tukey's procedure. Under any spherical covariance
the pairwise correlation is the ordinary studentized-range correlation, and
at \(\nu=(k-1)(n-1)\) the critical constant is the ordinary Tukey constant.

The structures in Section~\ref{sec:cov-1955} and Table~\ref{tab:tukey} change
the constant. AR(1), the Toeplitz band, alternating ante-dependence, and the
random slope have roots below the ordinary value, so Table~2a and the root
of~\eqref{eq:7} are conservative for them. Heterogeneous compound symmetry
and heteroscedastic groups have roots above that value. Table~2a is then
anticonservative at \(p=2\), and the root of~\eqref{eq:7} is
anticonservative at every \(p\). For Tukey's procedure every computed
non-spherical covariance lowers the root, so the ordinary studentized-range
constant is conservative. In the four-time autoregressive example of
Section~\ref{sec:cov-ar}, a least-squares product approximation moves
the root by \(0.003\), and correlation \(1/2\) moves it by \(0.029\).

The reduction is exact when the contrast correlation does not depend on an
unknown variance-component ratio and the studentizing mean square is one
scaled chi-square independent of the contrasts. The integral is then
\eqref{eq:mvt-lambda} for a shared control term, which covers unequal
replication in which the blocks cancel, independent groups, and
heteroscedastic groups with known variance ratios, and it is
\eqref{eq:tukey-student} for the range under sphericity. The product form
is not supported for incomplete subject sets once the overlap is asymmetric and there
are four or more treatments, and for autoregressive or unstructured
baseline comparisons with four or more follow-up times. The range integral
stops when a pairwise variance depends on the pair, or when a disjoint pair
acquires a nonzero correlation. Separate variance estimates in each group
remove the single scale. The probability is then the unreduced
integral~\eqref{eq:mvt}, or the multivariate-\(t\) cube of
Section~\ref{sec:tukey-cov}. 

\bibliographystyle{plain}
\bibliography{dunnett_integral7_cs}   

\end{document}